\theoremstyle{plain}% Theorem-like structures provided by amsthm.sty
\newtheorem{theorem}{Theorem}[section]
\newtheorem{lemma}[theorem]{Lemma}
\theoremstyle{definition}
\newtheorem{definition}[theorem]{Definition}
\theoremstyle{remark}
\newtheorem{remark}{Remark}
\newcommand{\crb}{\color{black}}
\begin{document}

\title{Constrained Optimal Consensus in Multi-agent Systems
with First and Second Order Dynamics}

\author{
\name{Amir Adibzadeh\textsuperscript{a} \thanks{CONTACT Amir A. Suratgar Email: a-suratgar@aut.ac.ir}, Amir A. Suratgar\textsuperscript{a},  Mohammad B. Menhaj\textsuperscript{a}, Mohsen Zamani\textsuperscript{b} }
    \affil{\textsuperscript{a}Electrical Engineering
    Department, Amirkabir University of Technology, Tehran 15914, Iran; \textsuperscript{b}School of Electrical Engineering and Computer Science,
University of Newcastle, Callaghan, NSW 2308, Australia}
}
\maketitle

\begin{abstract}
This paper fully studies distributed optimal consensus
problem in undirected
dynamical networks. We consider a group of networked agents
that are supposed to rendezvous at the optimal point of a collective convex
objective function. Each agent has no knowledge
about the global objective function and only has
access to its own local objective function, {which is a portion of the global one,} and states information
of agents within its neighborhood set.
In this setup, all agents coordinate
with their neighbors to seek  the consensus point that minimizes the network's global objective
function. In the current paper,  we consider  agents with
single-integrator and double-integrator dynamics. We further suppose that agents' movements are limited
by some convex inequality constraints.  In order to find the optimal consensus point under the described scenario,
we combine the interior-point optimization algorithm with a consensus protocol and propose a distributed control law.
The associated convergence analysis based on Lyapunov stability analysis is provided.

\end{abstract}

\begin{keywords}
Networked systems, Consensus, Convex optimization, Interior-point method
\end{keywords}

\section{Introduction}\label{sec1}

In the past, consensus problems in a network of autonomous
agents have been investigated from different aspects \textcolor{black}{such as} communication
topology, agents' dynamics, and the consensus value properties
\cite{olfati2004consensus,ren2007distributed,cheng2016reaching,zhang2012adaptive,wieland2011internal,fan2014semi,rezaee2015average}.
%Constraints on information flow in the network and the lack of information processing center are the main reasons
%why consensus algorithms are often distributed in their nature. Distributed
%consensus means that some or the whole states of all agents converge
%to an agreed value of common interest in a cooperative fashion.
Moreover, in many practical scenarios, the consensus problem under some local
constraints on the agents' states is considered \cite{nedic2010constrained,lin2014constrained,lee2011constrained}.
\cite{lee2011constrained} applied a logarithmic barrier function to guarantee that agents agree
on a consensus value that must belong to the intersection of distinct
convex sets through sharing an auxiliary variable associated with a convex
function representing the constraint set.
To solve set-constrained consensus problems, a distributed consensus protocol was
proposed in \cite{nedic2010constrained}. In this reference,
a consensus protocol is combined with a projection operator,
adopted to satisfy set constraints, in order to move agents to an agreed point
that is restricted to lie in the intersection of local convex constraint sets.
The article \cite{lin2014constrained}
extended the work of \cite{nedic2010constrained} to study the problem
of constrained consensus in unbalanced networks.

In another stream of research, distributed convex optimization problems in a network of agents
are considered. In such problems, each agent is assigned with a local objective function,
and the final consensus value is required to minimize the
sum of individual uncoupled convex functions.
%Many results were obtained based on
%gradients or subgradients of the local cost functions, combined with consensus laws.
The goal is to propose a distributed control law that
achieves a consensus on the minimizer  of the sum of all individual cost functions.
%For instance, some sophisticated dynamics whose stable point
%is supposed to be the solution of the optimization problem was imposed on each agent.
{\cite{nedic2009distributed}} exploited a subgradient-based distributed
method to find an approximate optimal solution to a convex
optimization problem over a network.
{In \cite{lu2012zero}, through an invariant zero-gradient-sum manifold, the states
 of a proposed weight-balanced directed network are driven toward the optimal solution
of an unconstrained convex distributed optimization problem.}

To deal with distributed optimization problems with inequality and
equality constraints, some researches were conducted based on primal-dual methods with continuous-time
agents. \cite{raffard2004distributed} used a dualization scheme,
 to solve distributed optimization problems
in a network of dynamical nonlinear agents with a small duality gap. %They applied this technique
%to formation flight control.
In  \cite{yuan2011distributed,yi2015distributed,kia2015distributed},
to find the saddle point of the Lagrangian function,
a distributed gradient-based dynamics was developed for dual and primal
variables associated with each agent's constraint. In this approach, complexity
of the problem increases as the network grows in size and the number
of constraints increases. It is  worthwhile mentioning that, to deal with the consensus equality
constraint, the primal-dual approach yields linear terms associated
with this constraint. This restricts the obtained protocol from adopting nonlinear consensus strategies that
can in turn deliver fast convergence outcomes.
Besides, in the case of high order dynamics, this approach
does not work. To relax this restriction, one can
split the constrained distributed optimization problem into two parts, namely, a consensus subproblem
and local optimization ones,  see e.g \cite{rahili2015distributed}.
Then, the consensus subproblem can be dealt with
independently, and each agent's control law is obtained from the combination of the consensus protocol and other terms associated with
the local optimization problem. Following this line, the paper \cite{qiu2016distributed}
integrated a consensus protocol and a subgradient term into single-integrator
agents' control laws to tackle a distributed constrained optimal consensus
problem for single-integrator multi-agent systems with some common convex
set constraint. \cite{yang2016multi} exploited the same technique and
 presented a proportional-integral consensus protocol for
 distributed optimization problems with general constraints. Moreover, \cite{yang2016multi} relaxed the
assumption of global convexity on each local objective function to convexity on locally bounded feasible region.

Distributed optimal consensus  for double-integrator networks has been considered in few
papers, \textcolor{black}{see e.g} \cite{rahili2015distributed,xie2017global}. In \cite{rahili2015distributed},
a discontinuous nonlinear consensus protocol is combined with a distributed
gradient-based optimization algorithm to find the minimizer of a collective
smooth time-varying cost functions for two cases of single-integrator
networks and double-integrator networks. The authors of \cite{xie2017global}
proposed a bounded control law applied to a network of double-integrator agents, which
are supposed to reach consensus at a \textcolor{black}{value}
that minimizes the sum of local objective functions.
 In both above mentioned works, agents admit no constraint.

{\crb To the best knowledge of the authors, the optimal consensus problem with inequality constraints for networks with second-order agents has not been considered in details in the existing works. A solution to the optimal consensus problem for single- and double-integrator networks has already been developed by \cite{xie2017global}. However, these authors did not assume any constraint for the agents operating within the network. In practice, agents such as wheeled robots must admit constraints imposed by the field they move on. %Note that the problem of constrained consensus can be transformed into an optimal consensus problem subject to the constraints. % In such an optimization problem the local objective functions can be assumed to be linear or quadratic convex functions.\\
Furthermore, we develop a modified version of barrier method to solve the constrained optimal consensus problems.}
%In this problem each agent shall minimize the sum of private convex objective functions, with regard to private inequality constraints, and, simultaneously, reach a consensus on their positions through only sharing their information with their neighbors.

In this paper, we consider the constrained distributed optimal consensus
problem for  both single- and double-integrator networks, where each
agent is assigned with a convex objective function and \textcolor{black}{an inequality
constraint}. The main challenge to the double-integrator case is that one
does not have direct control on the positions of agents while the objective
function depends on the position of agents. \textcolor{black}{In this scenario, all}
 agents shall make a rendezvous at a point
that minimizes the sum of the individual uncoupled cost functions
and, simultaneously, satisfy all local inequality constraints.
%We assume two different scenarios of common and distinct constraints for each case.
To solve the present problem, we split it into \textcolor{black}{two subproblems, namely a consensus problem and individual
convex optimization ones.}
%To deal with the former problem, we utilize
%a continuous consensus protocol based on local information exchanged
%under an undirected graph. Then,
We exploit a slightly modified version of interior-point method to solve
the convex optimization subproblems. Moreover, to relax some of the restrictive  requirements imposed by this protocol,
%the assumption of Hessians of local objective and inequality constraint functions  being identical,
we  present a consensus-based distributed average tracking algorithm,
in which agents estimate components of the global objective function in a cooperative fashion.

This paper is structured as follows. The next section
reviews some background materials required in this paper.
We deal with the problem of distributed constrained optimal consensus for agents
with single-integrator dynamics in Section 3. In
Section 4, the same problem is investigated for the case of double-integrators.
 A numerical example is given in Section 5, and, finally, in Section 6, we present a conclusion for this paper.

\section{Notations and Preliminaries}\label{sec3}

In this section, we recall some preliminary lemmas
and concepts from graph theory, convex optimization, and stability
of dynamical systems which we will refer to later in this paper.

\subsection{Notations}

Throughout this paper, $\left\Vert \cdot\right\Vert _{p}$ and $\left\Vert \cdot\right\Vert $
denote p-norm and 2-norm operators, respectively. $\text{\ensuremath{\mathbb{{R}}}}$
represents the real numbers set and $\mathbb{{R}}^{+}$ implies the
positive real numbers subset. $\mathbb{{R}}^{N}$ includes all vectors
with $N$ real elements.
%\textcolor{black}{ $[\mathcal{{M}}_{ij}]_{N\times N}$ represents
%an $N\times N$ matrix with entries $\mathcal{{M}}_{ij}$, where the
%index $i$ stands for the $i$-th row and $j$ refers to $j$-th column.}
$\mathbb{{R}}^{N\times N}$ represents the set of all $N\times N$
matrices with real entries. $\left|\mathcal{{S}}\right|$ denotes
the cardinality of the set $\mathcal{{S}}$. For convenience, in the
sequel, set $sig(y)^{q}=|y|^q{sgn(y)}$ with $0<q<1$ and $y\in\mathbb{{R}}$.
$|y|$ is the absolute value of $y$ and $sgn(\cdot)$ is the
sign function. Note that for the vector valued arguments, $sig(\cdot)^{p}$ is
defined component-wise.

\subsection{Graph Theory }

Let $\mathcal{{G}=\left\{ \text{\ensuremath{\mathcal{V\text{,}E\text{,}A}}}\right\} }$
denote an undirected network, where $\mathcal{{V}}=\{\vartheta{}_{1},\cdots,\vartheta_{N}\}$
is the set of nodes and $\mathcal{{E}}\subseteq\mathcal{V\times V}$
represents the set of edges. \textcolor{black}{An edge (link) between
node $\vartheta_{i}$ and node $\vartheta_{j}$ is denoted by the
pair $(\vartheta_{i},\vartheta_{j})\in\mathcal{{E}}$, that indicates
that two nodes $\vartheta_{i}$ and $\vartheta_{j}$ exchange
information. \textcolor{black}{ Note that $(\vartheta_{i},\vartheta_{j})\in\mathcal{{E}}$ if and only if $(\vartheta_{j},\vartheta_{i})\in\mathcal{{E}}$.} The matrix $\mathcal{{A}=}[a_{ij}]_{N\times N}$ is the
adjacency matrix.} For an undirected graph, $\mathcal{{A}}$ is symmetric
and $a_{ij}=1$ means that $(\vartheta_{i},\vartheta_{j})\in\mathcal{{E}}$
and $a_{ij}=0$ indicates $(\vartheta_{i},\vartheta_{j})\notin\mathcal{{E}}$.
It is assumed that there is no self-loop, i.e. $a_{ii}=0$. The set
of neighbors of node $\vartheta_{i}$ is denoted by $\mathcal{{N}}_{\text{i}}=\{j\in\mathcal{\mathcal{{V}}}:(\vartheta_{i},\vartheta_{j})\in\mathcal{{E}}\}$.
{ Throughout this paper, we use the notation $\mathcal{N}$  to indicate the set $\{1,\cdots,N\}$, which is the set of all the indices assigned to all nodes.}
Assume an arbitrary orientation for the edges in $\mathcal{{G}}$,
then, $D=[d_{ik}]\in\mathbb{{R}}^{N\times|\mathcal{{E}}|}$ is the
incidence matrix associated with the undirected graph $\mathcal{{G}}$,
in which $d_{ik}=-1$ if the edge $(\vartheta_{i},\vartheta_{j})$
leaves node $\vartheta_{i}$, $d_{ik}=1$ if it enters the node, and
$d_{ik}=0$ otherwise. The Laplacian matrix $L=[l_{ij}]\in\mathcal{\mathbb{{R}}}^{N\times N}$
associated with the graph $\mathcal{{G}}$ is defined as $l_{ii}=\sum_{j=1,j\neq i}^{N}a_{ij}$
and $l_{ij}$$=-a_{ij}$ for $i\neq j$ . \textcolor{black}{Note that
$L=DD^{\top}$. If $\mathbf{\mathbf{1}\in\mathbb{R}^{N}}$ denotes
a vector of which all of entries are set to 1, then, $L\mathbf{1}=\mathbf{0}$
and $\mathbf{1}^{\top}L=\mathbf{0}$. }\textcolor{black}{All eigenvalues
of the Laplacian matrix $L$ are non-negative and }\textcolor{black}{it has }\textcolor{black}{only}\textcolor{black}{{} one zero eigenvalue
if the graph $\mathcal{{G}}$ }\textcolor{black}{is}\textcolor{black}{{}
connected. }We define consensus error in a network by $\bar{e}_{x}=\Pi\bar{x}$
where $\Pi=I_{N}-\frac{1}{N}\mathbf{1}_{N}\mathbf{1}_{N}^{\top}$,
and $\bar{x}$ denotes the aggregate state of the network as $\bar{x}=\left[x_{1}\ldots x_{N}\right]^{\top}$.
Note that $\mathbf{1}^{\top}\Pi=\mathbf{0}$ and $\Pi\mathbf{1=0}$.\textcolor{black}{{} }

The following lemma is crucial to some of the results studied
in this paper.
\begin{lemma}
 \label{lem:Courant-Fischer}(Courant-Fischer Formula) \cite{horn2012matrix}
Let A be an $n\times n$ real symmetric matrix with eigenvalues $\lambda_{1}\leq\lambda_{2}\leq\cdots\leq\lambda_{n}$
and corresponding eigenvectors $e_{1},\ldots,e_{n}$. Let $S_{k}$
denote the span of $e_{1},\ldots,e_{k}$ and $S_{k}^{\perp}$ denote
the orthogonal complement of $S_{k}$. Then, $\lambda_{k}=\underset{\underset{x\neq0}{x\in S_{k}^{\perp}}}{\text{min}}\frac{x^{T}Ax}{x^{T}x}$.
\end{lemma}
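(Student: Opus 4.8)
The plan is to prove the identity directly from the spectral decomposition of $A$, reducing the Rayleigh quotient on the right-hand side to a weighted average of eigenvalues. Since $A$ is real and symmetric, the spectral theorem guarantees that the eigenvectors $e_1,\ldots,e_n$ can be taken orthonormal and to form a basis of $\mathbb{R}^n$. Consequently every $x\in\mathbb{R}^n$ admits the unique expansion $x=\sum_{i=1}^{n} c_i e_i$ with $c_i=e_i^{T}x$, and the orthonormality of the eigenbasis lets me describe $S_k^{\perp}$ explicitly as the span of those eigenvectors that are orthogonal to the generators of $S_k$; thus membership $x\in S_k^{\perp}$ is equivalent to the vanishing of the coefficients $c_i$ outside a prescribed block of indices.

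Next I would substitute this expansion into the quotient. Using $A e_i=\lambda_i e_i$ together with $e_i^{T}e_j=\delta_{ij}$, both the numerator and the denominator collapse to sums of squares, so that for a nonzero $x\in S_k^{\perp}$ we obtain
\begin{equation}
\frac{x^{T}Ax}{x^{T}x}=\frac{\sum_{i} \lambda_i c_i^{2}}{\sum_{i} c_i^{2}},
\end{equation}
where the sums run over exactly the indices of the eigenvectors that span $S_k^{\perp}$. The right-hand side is then a convex combination of the corresponding eigenvalues, and is therefore bounded below by the smallest eigenvalue appearing in that index set. Because the eigenvalues are listed in increasing order, this least admissible eigenvalue is $\lambda_k$, which yields the inequality $x^{T}Ax/(x^{T}x)\ge\lambda_k$ for all nonzero $x\in S_k^{\perp}$.

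Finally I would check that the lower bound is attained, which upgrades the infimum to a genuine minimum: taking $x$ to be the (unit) eigenvector associated with that smallest admissible eigenvalue, which lies in $S_k^{\perp}$ by construction, gives $x^{T}Ax/(x^{T}x)=\lambda_k$, completing the proof. The analytic content is the elementary observation that a normalized quadratic form expressed in an orthonormal eigenbasis is a weighted average of the eigenvalues; the only point requiring real care is the index bookkeeping, namely identifying precisely which eigenvectors span $S_k^{\perp}$ and confirming that the minimal eigenvalue among them coincides with the claimed $\lambda_k$, since a shift by one in that identification would alter the asserted value.
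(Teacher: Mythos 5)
Your overall strategy (orthonormal eigenbasis from the spectral theorem, collapse of the Rayleigh quotient to a convex combination of eigenvalues, attainment at an eigenvector) is the standard and correct route to results of this type; note that the paper itself offers no proof at all, since it simply cites Horn and Johnson, so there is nothing to compare against on that side. However, your argument breaks down at exactly the point you yourself flag as ``the only point requiring real care'': the index bookkeeping. With $S_k=\mathrm{span}(e_1,\ldots,e_k)$ as defined in the statement, the orthogonal complement is $S_k^{\perp}=\mathrm{span}(e_{k+1},\ldots,e_n)$, so for $0\neq x\in S_k^{\perp}$ the quotient is a convex combination of $\lambda_{k+1},\ldots,\lambda_n$, and its minimum is $\lambda_{k+1}$, attained at $e_{k+1}$ --- not $\lambda_k$, as you assert. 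Your claim that ``the least admissible eigenvalue is $\lambda_k$'' contradicts your own (correct) description of $S_k^{\perp}$: the eigenvector $e_k$ lies in $S_k$, not in $S_k^{\perp}$, so $\lambda_k$ is not among the admissible eigenvalues at all (except by accidental multiplicity $\lambda_k=\lambda_{k+1}$).

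The root cause is that the lemma as printed is off by one and, read literally, is false, so no correct proof of it can exist. Concretely, take $A=\mathrm{diag}(1,2,3)$ and $k=1$: then $S_1^{\perp}=\mathrm{span}(e_2,e_3)$ and $\min_{0\neq x\in S_1^{\perp}}x^{T}Ax/(x^{T}x)=2=\lambda_2\neq\lambda_1=1$. The standard Rayleigh--Ritz/Courant--Fischer statement is $\lambda_k=\min\{x^{T}Ax/(x^{T}x):0\neq x,\ x\perp e_1,\ldots,e_{k-1}\}$, i.e.\ the minimum is taken over $S_{k-1}^{\perp}$; equivalently, $\min_{0\neq x\in S_k^{\perp}}x^{T}Ax/(x^{T}x)=\lambda_{k+1}$. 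The corrected version is also the one the paper actually needs later: in the proof of the consensus lemma the error vector satisfies $\bar{e}_x\perp\mathbf{1}$, where $\mathbf{1}$ spans the $\lambda_1=0$ eigenspace of $L=DD^{\top}$, and the conclusion drawn there is $\bar{e}_x^{\top}DD^{\top}\bar{e}_x\geq\lambda_2(L)\left\Vert \bar{e}_x\right\Vert^2$, which is the case $k=1$ yielding $\lambda_2$, not $\lambda_1$. So to repair your write-up you should either prove the statement with $\lambda_{k+1}$ over $S_k^{\perp}$, or restate the lemma with the minimum taken over $S_{k-1}^{\perp}$; as it stands, your middle paragraph asserts an identification that cannot be justified, and the hedge in your final sentence does not discharge it.
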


\subsection{Convex Optimization}

The differentiable function $\mathcal{{F}}(\cdot):\mathbb{R}^{n}\rightarrow\mathcal{\mathbb{R}}$
is convex if and only if $\mathcal{{F}}(w_2)\geq\mathcal{{F}}(w_1)+\nabla\mathcal{{F}}(w_1)^{\top}(w_2-w_1)$
for all $w_1,w_2\in\mathbb{R}^{n}$. The function $\mathcal{{F}}(\cdot):\mathbb{R}^{n}\rightarrow\mathcal{\mathbb{R}}$
is said strictly convex\textcolor{black}{{} if and only if $\mathcal{{F}}(w_2)>\mathcal{{F}}(w_1)+\nabla\mathcal{{F}}(w_1)^{\top}(w_2-w_1)$}
for all $w_1,w_2\in\mathbb{R}^{n}$. Consider the following convex optimization
problem with an inequality constraint

\begin{gather}
\text{{min}}\,\mathcal{{F}}({w}),\label{eq:Problem1}\\
\text{subject to }g_{i}(w)\leq0,\,\,\,i=1,\ldots,M,\nonumber
\end{gather}
where $\mathcal{{F}}(\cdot):\mathbb{R}^{n}\rightarrow\mathcal{\mathbb{R}}$
and $g_{i}(\cdot):\mathbb{R}^{n}\rightarrow\mathcal{\mathbb{R}}$
are both convex functions. The following lemma provides \textcolor{black}{the
condition for the optimal solution of }problem \eqref{eq:Problem1}.
\begin{lemma}
\cite[p. 243]{boyd2004convex} \label{lem:(KKT-Conditions)}(KKT Conditions)
Consider the convex optimization problem \eqref{eq:Problem1}. Assume
that  functions $\mathcal{{F}}\left(\cdot\right)$ and $g_{i}\left(\cdot\right)$
are continuously differentiable functions on $\mathbb{R}^{n}$ and there
exists $w^{*}\in\mathbb{R}^{n}$ such that $g_{i}(w^{*})\leq0$, $i=1,\ldots,M$.
$\mathcal{{F}}\left(\cdot\right)$ is also radially unbounded. Then,
$w^{*}$ is the optimal solution of the problem \eqref{eq:Problem1}
if and only if there exist some Lagrangian multipliers $\lambda_{i}^{*}>0$,
$i=1,\ldots,M,$ such that the following conditions are satisfied
\begin{gather}
g_{i}(w^{*})\leq0,\,\,\,\,\,\,\lambda_{i}^{*}g_{i}(w^{*})=0,\,i=1,\ldots,M,\label{eq:KKT1}\\
\nabla\mathcal{{F}}(w^{*})+\sum_{i=1}^{M}\lambda_{i}^{*}\nabla g_{i}(w^{*})=0.\label{eq:KKT2}
\end{gather}

\end{lemma}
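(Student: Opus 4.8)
The plan is to prove the two implications separately, disposing first of the sufficiency direction (the KKT conditions imply optimality), which follows cleanly from convexity, and then turning to the necessity direction (optimality implies the KKT conditions), where the real difficulty lies.

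For sufficiency, suppose $w^{*}$ satisfies \eqref{eq:KKT1} and \eqref{eq:KKT2} with multipliers $\lambda_{i}^{*}>0$, and let $w$ be any feasible point, so $g_{i}(w)\leq0$ for all $i$. First I would invoke the first-order characterization of convexity recalled just above the lemma: since $\mathcal{F}$ is convex, $\mathcal{F}(w)\geq\mathcal{F}(w^{*})+\nabla\mathcal{F}(w^{*})^{\top}(w-w^{*})$. Substituting the stationarity condition \eqref{eq:KKT2}, namely $\nabla\mathcal{F}(w^{*})=-\sum_{i=1}^{M}\lambda_{i}^{*}\nabla g_{i}(w^{*})$, converts this into $\mathcal{F}(w)\geq\mathcal{F}(w^{*})-\sum_{i=1}^{M}\lambda_{i}^{*}\nabla g_{i}(w^{*})^{\top}(w-w^{*})$. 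Applying convexity of each $g_{i}$ in the form $\nabla g_{i}(w^{*})^{\top}(w-w^{*})\leq g_{i}(w)-g_{i}(w^{*})$, together with $\lambda_{i}^{*}>0$, then the complementary slackness $\lambda_{i}^{*}g_{i}(w^{*})=0$, and finally $g_{i}(w)\leq0$, collapses the right-hand side to $\mathcal{F}(w)\geq\mathcal{F}(w^{*})-\sum_{i=1}^{M}\lambda_{i}^{*}g_{i}(w)\geq\mathcal{F}(w^{*})$. Hence $w^{*}$ minimizes $\mathcal{F}$ over the feasible set.

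For necessity, suppose $w^{*}$ is optimal. I would proceed through convex duality. The existence of the minimizer $w^{*}$ is itself guaranteed by the radial unboundedness of $\mathcal{F}$, which renders the sublevel sets compact so that the continuous objective attains its infimum on the closed feasible region. To extract the multipliers, I would form the Lagrangian $L(w,\lambda)=\mathcal{F}(w)+\sum_{i=1}^{M}\lambda_{i}g_{i}(w)$ and argue that under a strict feasibility (Slater-type) condition the duality gap vanishes, so a dual optimal vector $\lambda^{*}$ with nonnegative entries exists. At the primal–dual optimum, $w^{*}$ minimizes the convex function $L(\cdot,\lambda^{*})$ over $\mathbb{R}^{n}$, which forces its gradient to vanish and yields \eqref{eq:KKT2}; matching the primal and dual optimal values then forces $\sum_{i=1}^{M}\lambda_{i}^{*}g_{i}(w^{*})=0$, and since each term is nonpositive this gives the complementary slackness in \eqref{eq:KKT1}.

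I expect the main obstacle to be the constraint qualification underlying this necessity argument. The hypothesis as stated only asserts feasibility (existence of a point with $g_{i}(w^{*})\leq0$), whereas the standard route to the stationarity condition requires strict feasibility, i.e. the existence of some $\bar{w}$ with $g_{i}(\bar{w})<0$ for all $i$. I would therefore either strengthen the hypothesis accordingly or, equivalently, run a separating hyperplane argument directly on the convex set $\{(g_{1}(w),\dots,g_{M}(w),\mathcal{F}(w)-\mathcal{F}(w^{*})):w\in\mathbb{R}^{n}\}$ shifted into the appropriate orthant; its convexity, inherited from that of $\mathcal{F}$ and the $g_{i}$, is exactly what makes the separation valid. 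Verifying that the separating functional is nontrivial and has the correct sign, so as to produce genuine nonnegative multipliers rather than a degenerate certificate, is the technical crux that must be checked with care.
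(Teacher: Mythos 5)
The paper offers no proof of this lemma at all: it is quoted as a known result from Boyd and Vandenberghe (p.~243), so there is no internal argument to compare against, and your proposal must be judged on its own. Your sufficiency half is correct and complete: convexity of $\mathcal{F}$, the stationarity condition, convexity of each $g_i$, complementary slackness, and feasibility of the comparison point chain together exactly as you wrote (and this direction in fact needs only $\lambda_i^*\geq 0$).

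The necessity half contains two genuine gaps. First, you rightly observe that the stated hypothesis grants only feasibility (a point with $g_i(w^*)\leq 0$) rather than strict feasibility, but you then present the separating-hyperplane argument as an ``equivalent'' way around strengthening the hypothesis. It is not: without a Slater point the separation can only yield a Fritz John certificate in which the multiplier attached to the objective may vanish, and no amount of care in checking nondegeneracy can repair this, because the necessity direction is simply false under non-strict feasibility. Concretely, take $\mathcal{F}(w)=(w+1)^2$ and $g_1(w)=w^2$ on $\mathbb{R}$: all hypotheses of the lemma hold, the feasible set is $\{0\}$, $w^*=0$ is optimal, yet $\nabla\mathcal{F}(0)=2$ and $\nabla g_1(0)=0$, so no multiplier satisfies \eqref{eq:KKT2}. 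Hence your plan only goes through if Slater's condition is assumed; there is no alternative route under the hypothesis as printed. Second, your duality argument delivers $\lambda_i^*\geq 0$, whereas the lemma asserts $\lambda_i^*>0$, and these are not interchangeable: strict positivity combined with complementary slackness forces every constraint to be active at $w^*$, so the ``only if'' direction fails whenever the optimum is interior to some constraint (take $\mathcal{F}(w)=w^2$, $g_1(w)=w-1$: the optimum is $w^*=0$ with $g_1(w^*)=-1$, so $\lambda_1^*g_1(w^*)=0$ forces $\lambda_1^*=0$). Your proof, even once completed under Slater's condition, establishes the textbook statement with nonnegative multipliers, not the statement as printed; that discrepancy should be flagged explicitly rather than silently absorbed.
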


\subsection{Stability of Dynamical Systems}

Consider the dynamical system
\begin{equation}
\dot{x}=f(x,t),\label{eq:Perl2}
\end{equation}
where $f(\cdot):\mathcal{D}\times[0,\infty)\rightarrow\mathbb{R}^{N}$
is piecewise continuous in $t$ and locally Lipschitz in $x$ on $\mathcal{D}\times[0,\infty)$,
and $\mathcal{D}\subset\mathbb{R}^{N}$ is a domain that contains the origin,
$x=0$.
\begin{lemma}
\label{lem:stability}\cite[Theorem 5.1]{khalil1996nonlinear} Let
$V:\mathcal{D}\times[0,\infty)\rightarrow\mathbb{R}^{N}$ be a continuously
differentiable function such that
\begin{eqnarray*}
W_{1}(x) & \leq & V(x,t)\leq W_{2}(x)\\
\frac{\partial V(x,t)}{\partial t}+\frac{\partial V}{\partial x}f(x,t) & \leq & -W_{3}(x),\,\,\,\forall\left\Vert x\right\Vert \geq\mu>0,
\end{eqnarray*}
$\forall t\geq0$, $\forall x\in \mathcal{D}$, where $W_{1}(x)$, $W_{2}(x)$,
and $W_{3}(x)$ are continuous positive definite functions on $\mathcal{D}$.
Take $r>0$ such that $B_{r}\subset \mathcal{D}$. Suppose that $\mu$ is small
enough such that
\[
\underset{\left\Vert x\right\Vert \leq\mu}{\text{{max}}}W_{2}(x)<\underset{\left\Vert x\right\Vert =r}{\text{min}}W_{1}(x)
\]
Let $\eta=\text{max}_{\left\Vert x\right\Vert \leq\mu}W_{2}(x)$
and take $\rho$ such that $\eta<\rho<\text{min}_{\left\Vert x\right\Vert =r}W_{1}(x)$.
Then, there exists a finite time $t_{1}$ (dependent on $x(t_{0})$
and $\mu$) such that $\forall\,x(t_{0})\in\{x\in B_{r}|W_{2}(x)\leq\rho\}$,
the solutions of $\dot{x}=f(x,t)$ satisfy $x(t)\in\{x\in B_{r}|W_{1}(x)\leq\rho\},\forall t\geq t_{1}$.
Moreover, if $\mathcal{D}=\mathbb{{R}}^{N}$ and $W_{1}(x)$ is radially unbounded,
then this result holds for any initial state and any $\mu$.
\end{lemma}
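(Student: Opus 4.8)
The plan is to treat this as the standard uniform ultimate boundedness result and to argue entirely through the time-independent envelopes $W_1$ and $W_2$, since the sublevel sets of the time-varying $V$ themselves drift with $t$ and cannot be frozen into a single invariant region. First I would record the nested inclusions that drive everything: because $W_1(x)\le V(x,t)\le W_2(x)$, the set $\{x:W_2(x)\le\rho\}$ lies inside the instantaneous sublevel set $\{x:V(x,t)\le\rho\}$ for every $t$, which in turn lies inside $\{x:W_1(x)\le\rho\}$. The hypothesis $\eta<\rho<\min_{\|x\|=r}W_1(x)$ then guarantees that $\{x\in B_r:W_1(x)\le\rho\}$ never touches the sphere $\|x\|=r$, so it is contained in the interior of $B_r$; symmetrically, $\rho>\eta=\max_{\|x\|\le\mu}W_2(x)$ forces any point with $W_2(x)\ge\rho$ (in particular any point with $V(x,t)\ge\rho$) to satisfy $\|x\|>\mu$, since $\|x\|\le\mu$ would give $W_2(x)\le\eta<\rho$.

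The second step is to establish positive invariance of the moving set $\{x:V(x,t)\le\rho\}$ while the trajectory stays in $B_r$. On the surface $V(x,t)=\rho$ the two observations above combine to yield $\mu<\|x\|<r$, so the derivative hypothesis applies and gives $\frac{\partial V}{\partial t}+\frac{\partial V}{\partial x}f\le-W_3(x)<0$ there. Hence $V$ cannot cross the level $\rho$ upward, and a solution launched from $x(t_0)\in\{x\in B_r:W_2(x)\le\rho\}$, for which $V(x(t_0),t_0)\le W_2(x(t_0))\le\rho$, keeps $V(x(t),t)\le\rho$ for all later time. By the inclusion chain this confines $x(t)$ to $\{x\in B_r:W_1(x)\le\rho\}$, which both prevents escape from $B_r$ and already delivers the stated membership $x(t)\in\{x\in B_r:W_1(x)\le\rho\}$ for all $t\ge t_0$, so the literal claim holds with $t_1=t_0$.

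The genuinely finite-time content, inherited from the cited bound, is the descent into the tighter ultimate set $\{x:W_1(x)\le\eta\}$, and this is the only place where real estimation is needed. Here I would sharpen the decrease: on the compact annulus $\{\mu\le\|x\|\le r\}$ the continuous positive definite function $W_3$ attains a strictly positive minimum $k>0$, so $\dot V\le-k$ whenever the trajectory lies there. Since $V(x,t)>\eta$ implies $W_2(x)>\eta$ and hence $\|x\|>\mu$, the solution sits in this annulus and $V$ drops at a uniform rate as long as $V>\eta$; integrating shows $V$ reaches $\eta$ in a finite time $t_1$ bounded by $(\rho-\eta)/k$, a quantity depending only on the initial value and on $\mu$ through $k$. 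A repetition of the crossing argument of the second step, now at the level $\eta$, keeps the trajectory in the ultimate set thereafter.

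The main obstacle I anticipate is exactly the time dependence of $V$: one cannot work with a single fixed invariant sublevel set, so every inclusion and every boundary-crossing argument must be routed through the time-invariant envelopes $W_1,W_2$ and verified uniformly in $t\ge0$. The compactness of the annulus $\{\mu\le\|x\|\le r\}$, which converts the merely pointwise negativity of $\dot V$ into the uniform rate $k$, is the technical lever that upgrades asymptotic decrease to finite-time arrival. Finally, when $\mathcal D=\mathbb R^N$ and $W_1$ is radially unbounded, the sublevel set $\{W_1\le\rho\}$ is bounded for every $\rho$, so $r$ can be taken arbitrarily large and the cap on the initial condition disappears, yielding the global version for any $x(t_0)$ and any $\mu$.
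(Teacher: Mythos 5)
Your proof is correct, but note that the paper never actually proves this lemma: it is a preliminary result quoted, with citation, from \cite[Theorem 5.1]{khalil1996nonlinear}, so the only meaningful comparison is with the textbook argument --- and yours is essentially that argument: the envelope inclusions $\{W_2\le\rho\}\subset\{V(\cdot,t)\le\rho\}\subset\{W_1\le\rho\}$, forward invariance of the moving sublevel set obtained from negativity of $\dot V$ on the ring $\mu<\left\Vert x\right\Vert<r$, and the compactness bound $k=\min_{\mu\le\left\Vert x\right\Vert\le r}W_3(x)>0$ that converts pointwise decrease into finite-time arrival. Two remarks. First, you correctly spotted that the statement as transcribed in this paper is weaker than Khalil's original: the ultimate set should be $\{x\in B_r\,|\,W_1(x)\le\eta\}$, not $\{x\in B_r\,|\,W_1(x)\le\rho\}$, and with $\rho$ in place of $\eta$ the literal claim is indeed settled by your invariance step alone with $t_1=t_0$ (which also explains why the advertised dependence of $t_1$ on $\mu$ would otherwise be vacuous); your descent argument is precisely what restores the genuine ultimate-boundedness content. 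Second, one small imprecision: your closing ``crossing argument at level $\eta$'' cannot be run on the surface $V=\eta$ itself, because a point there only satisfies $W_2(x)\ge\eta$, which --- unlike at level $\rho>\eta$ --- does not force $\left\Vert x\right\Vert>\mu$; the maximum $\eta$ of $W_2$ over the ball $\left\Vert x\right\Vert\le\mu$ may be attained at an interior point, where the derivative hypothesis is silent. The repair is the implication you already isolated, $V>\eta\Rightarrow W_2>\eta\Rightarrow\left\Vert x\right\Vert>\mu\Rightarrow\dot V\le-W_3<0$, applied on the open region $\{V>\eta\}$ together with a mean-value argument: were $V$ to climb back above $\eta$, it would have to increase somewhere while strictly above $\eta$, which is impossible. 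This closes the argument without touching the level surface.
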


\section{Optimal Consensus for Single-integrator Dynamics}\label{sec4}

Consider $N$ dynamical agents under a network with the fixed topology
$\mathcal{{G}}$. Suppose that each agent is described by the continuous-time
single-integrator dynamics

\begin{equation}
\ensuremath{{\dot{x}_{i}}(t)={u_{i}}(t),}\quad\label{eq:1Agents_Dynamics}
\end{equation}
where $x_{i}(t)\in\mathbb{R}$ represents the position of agent $i$,
and $u_{i}(t)$ is the control input applied to agent $i$. In the rest of
this paper, notations $x_{i}$ and $x_{i}(t)$ are used interchangeably.
The same holds for $u_{i}$ and $u_{i}(t)$. Here, we consider only one dimensional agents for the sake of simplicity in notations.
However, it is straightforward to show that our algorithm can be extended to higher dimensional dynamics, i.e. the case where $x_i(t)\in\mathbb{R}^n$, as each dimension is decoupled from
others and, as a result, can be treated independently.  Each agent can share its state's information with agents within the
set of its neighbors, i.e. $\mathcal{{N}}_{i}$, based
on the graph $\mathcal{{G}}.$

The agents are supposed to rendezvous at a point, that
is the solution to the following convex optimization problem

\begin{equation}
\begin{array}{c}
\underset{x}{\text{min}}\,F(x)=\sum_{i=1}^{N}f_{i}(x),\\
\text{subject to}\,g_{i}(x)\leq0,\,i=1,\ldots,N,
\label{eq:Original_Prob}
\end{array}
\end{equation}
in which $f_{i}\left(\cdot\right):\mathbb{R\rightarrow\mathbb{R}}$
is the local objective function associated with node $\vartheta_{i}$
and $g_{i}\left(\cdot\right):\mathbb{R\rightarrow\mathbb{R}}$ represents
a constraint on the optimal position, associated with $i$-th agent.
{\crb Here, the variable $x$ is a scalar value that aims to minimize the global objective function in \eqref{eq:Original_Prob}.}
In other words, the agents shall meet each other in an optimum point
that fulfills all the constraint inequalities, i.e. $g_{i}(x)\leq0$,
$i\in\mathcal{N}$, and minimizes the aggregate objective function
$F(x)$. It is supposed that each agent only has knowledge of its
own local objective function as well as states information of those agents within the
set of its neighbors.

Note that solving the optimization problem \eqref{eq:Original_Prob}
in a centralized way requires knowledge of both the whole aggregate objective
function $\sum_{i=1}^{N}f_{i}(x)$ and all inequality constraints
$g_{i}(x)\leq0$, $i\in\mathcal{N}$.% and necessitates
%a fully connected graph.

{With considering the problem of consensus among the agents \eqref{eq:1Agents_Dynamics}, we reformulate the convex optimization problem} \eqref{eq:Original_Prob}
by
\begin{equation}
\begin{array}{c}
\underset{\underset{i=1,\cdots,N}{x_{i}}}{\text{min}}\sum_{i=1}^{N}f_{i}(x_{i}),\\
\text{subject to}\,\begin{cases}
g_{i}(x_{i})\leq0, & i=1,\ldots,N,\\
x_{i}=x_{j}.
\end{cases}\label{eq:Dis Min Prob}
\end{array}
\end{equation}
In the minimization problem \eqref{eq:Dis Min Prob}, the consensus
constraint, i.e. $x_{i}=x_{j},$ $\forall i,j\in\mathcal{N}$, is imposed
to guarantee that the same decision is made by all
agents eventually, and, subsequently, \textcolor{black}{ all agents rendezvous at the globally optimal point}.
In order to find the solution of the problem \eqref{eq:Dis Min Prob} \textcolor{black}{ in a distributed fashion, we  illustrate an algorithm in which}
each agent seeks the minimum of its own objective function, $f_{i}(x_{i})$,
fulfilling its associated inequality constraint, $g_{i}(x_{i})\leq0.$
Meanwhile, all agents exchange their states information through the
graph $\mathcal{{G}}$ to reach consensus on their position states. %In
%the sequel, we construct the control input $u_{i}$ such that the
%problem \eqref{eq:Dis Min Prob} \textcolor{black}{can be resolved in a distributed way.}

The following assumptions are considered in relation
to the optimization problem \eqref{eq:Dis Min Prob}.

\noindent \textit{Assumption 1.}
\renewcommand{\labelenumii}{\roman{enumii}}
\begin{enumerate}
\item[a.]  The objective functions, $f_{i}(\cdot)$, $i=1,\cdots,N$, are strictly
convex and twice continuously differentiable on $\mathbb{R^{\text{}}}$.
The functions $g_{i}(\cdot)$, $i=1,\cdots,N$, are convex and
twice continuously differentiable on $\mathbb{R^{\text{}}}$.
\item[b.] The global objective function $\sum_{i=1}^{N}f_{i}(x)$ is radially
unbounded, { with invertible Hessian $\sum_{i=1}^{N}\frac{\partial^{2}f_{i}({x})}{\partial{x^2}}$.}
\end{enumerate}
\textit{Assumption 2. (Slater's Condition)} There exists $x^{*}\in\mathbb{R}$
such that $g_{i}(x^{*})\leq0, \forall i\in\mathcal{N}$.

\noindent \textit{Assumption 3. }The graph $\mathcal{{G}}$ is undirected and
has one spanning tree.\\
%\begin{rmk}
%One may solve the constrained convex optimization problem \eqref{eq:Dis Min Prob}
%through a primal-dual approach by building an aggregate Lagrangian
%associated with the optimization problem. Then, some dynamics can be
%proposed to estimate values for local Lagrangian multipliers associated
%with the equality constraints, i.e. $x_{i}=x_{j},$ $i,j\text{\ensuremath{\in}}\{1,\ldots,N\}$,
%as well as other individual constraints from the original minimization
%problem, $g_{i}(x_{i})\leq0$, $i\in\{1,\ldots,N\}$.
%However, when networked agents are supposed to obey specific nonlinear
%consensus protocols in the control law, this approach does not work.
%That is why one needs to split the constrained distributed optimization
%problem \eqref{eq:Dis Min Prob} into a distributed optimization problem
%and a consensus one in order to deal with the consensus problem, separately.
%\end{rmk}

Intuitively, one can regard that the problem \eqref{eq:Dis Min Prob} consists of a convex
\textcolor{black}{optimization} problem, with inequality constraints,
and a consensus problem. The convex constrained optimization
problem can be defined as

\begin{equation}
\begin{array}{c}
\underset{\underset{i=1,\cdots,N}{x_{i}}}{\text{min}}\sum_{i=1}^{N}f_{i}(x_{i}),\\
\text{subject to}\,g_{i}(x_{i})\leq0,\,i=1,\ldots,N,
\end{array}\label{eq:Problem3}
\end{equation}
 while the consensus problem is
\begin{equation}
\underset{{\scriptstyle t\rightarrow\infty}}{lim}\,(x_{i}-x_{j})=0,\,\,\,\,i,j=1,\ldots,N.\label{eq:consensusProb}
\end{equation}
The convex optimization problem \eqref{eq:Problem3} can be reformulated
as follows,

\begin{alignat}{1}
\begin{array}{c}
\underset{\underset{i=1,\cdots,N}{x_{i}}}{\text{min}}\sum_{i=1}^{N}f_{i}(x_{i})-{\displaystyle \frac{\alpha}{\tau}}ln\left(-g_{i}(x_{i})\right).\end{array}\label{eq:1BarrierProb1}
\end{alignat}
 where $\tau\in\mathbb{{R}}^{+}$ and $\alpha>1$. The term $-ln\left(-g_{i}(x_{i})\right)$
is referred to as \textit{\textcolor{black}{logarithmic barrier }}\textcolor{black}{function}\textit{\textcolor{black}{.}}
Note that the domain of the logarithmic barrier function is the set of strictly
feasible points, i.e. $x_{i}\in\left\{ z\in\mathbb{{R}}:\,g_{i}(z)<0\right\} $.
The logarithmic barrier is a convex function; hence,
the new optimization problem is still a convex one.

Consider the objective function given in \eqref{eq:1BarrierProb1}.
It is easy to see that as $x_{i}$ approaches the hyperplane $g_{i}(x_{i})=0$,
the logarithmic barrier $-ln\left(-g_{i}(x_{i})\right)$ becomes extremely
large. Thus, it keeps the search domain within the strictly feasible
set. Note that the initial estimate shall be feasible, i.e. $g_{i}\left(x_{i}(0)\right)<0$,
$i\in\mathcal{N}$.

Suppose that the solutions to the optimization problem \eqref{eq:Problem3}
and \eqref{eq:1BarrierProb1} be $x^{*}$and $\widetilde{x}^{*}$,
respectively. Then, it can be shown that $f_{i}(x^{*})-f_{i}(\widetilde{x}^{*})=\frac{\alpha}{\tau}$
\cite{boyd2004convex,wang2011control}. This suggests a very straightforward
method for obtaining the solution to \eqref{eq:Problem3} with an
accuracy of $\varepsilon$ by choosing $\tau\geq\frac{\alpha}{\varepsilon}$
and solving \eqref{eq:1BarrierProb1}. Consequently, as $\tau$ increases,
the solution to the \textcolor{black}{optimization} problem \eqref{eq:1BarrierProb1}
becomes closer to the solution to \eqref{eq:Problem3}, i.e. as $\tau\rightarrow\infty$,
$f_{i}(x^{*})-f_{i}(\widetilde{x}^{*})\rightarrow0$ is concluded
\cite[pp. 568-571]{boyd2004convex}. In the literature,
this approach to solve inequality-constrained convex minimization
problems is known as \textit{interior-point} method. \textcolor{black}{}\footnote{\textcolor{black}{Interior-point method was first proposed by Fiacco et al.
in \cite{fiacco1990nonlinear} and is originally based on solving
a sequential unconstrained optimization problems, of which at every
sequence the value of $\tau$ increases. In this method, the last
point found in the previous step is used as the starting point for
the next one, and it goes until $\tau\geq\frac{\alpha}{\varepsilon}$. }}

We now express optimality conditions (so-called centrality conditions)
for the convex optimization problem \eqref{eq:1BarrierProb1} as

\begin{equation}
\begin{array}{c}
{\displaystyle \sum_{i=1}^{N}\frac{\partial f_{i}(\tilde{x}_{i}^{*})}{\partial x_{i}}}-{\displaystyle \frac{\alpha}{\tau}\frac{{\scriptstyle {\displaystyle {\displaystyle \frac{\partial g_{i}(\tilde{x}_{i}^{*})}{\partial x_{i}}}}}}{g_{i}(\tilde{x}_{i}^{*})}}=0,\\
g_{i}(\tilde{x}_{i}^{*})\leq0.
\end{array}\label{eq:1OptCondition}
\end{equation}
Given KKT conditions, i.e \eqref{eq:KKT1} and \eqref{eq:KKT2}, one
can define a dual variable as $\lambda_{i}=\frac{-{\textstyle \frac{\alpha}{\tau}}}{g_{i}(x_{i})}$,
then, according to \eqref{eq:KKT1}, it can be said that $\lambda_{i}^{*}g_{i}(\tilde{x}_{i}^{*})={\displaystyle \frac{-\alpha}{\text{\ensuremath{\tau}}}}$
and as $\tau\rightarrow\infty$, \eqref{eq:KKT1} is satisfied. Hence,
the solution to the problem \eqref{eq:1BarrierProb1} converges to
that of \eqref{eq:Problem3} as $\tau\rightarrow\infty$. Now, { we exploit an extended version of the interior point method} to
redefine the problem \eqref{eq:1BarrierProb1} as
\begin{equation}
\underset{\underset{i=1,\cdots,N}{x_{i}}}{\text{min}}\sum_{i=1}^{N}f_{i}(x_{i})-{\displaystyle \frac{\alpha}{t+1}}ln\left(-g_{i}(x_{i})\right)\label{eq:1BarrierProb2}
\end{equation}
Then, we propose the following control law to find the solution to the
optimization problem \eqref{eq:1BarrierProb2},

\begin{eqnarray}
u_{i} = -\left(\frac{\partial^{2}L_{i}(x_{i},t)}{\partial x_{i}^{2}}\right)^{-1}\left(\frac{\partial L_{i}(x_{i},t)}{\partial x_{i}}+\frac{\partial^{2}L_{i}(x_{i},t)}{\partial t\partial x_{i}}\right)+r_{i},\,\,i=1,\ldots,N,\label{eq:1Cont Law}
\end{eqnarray}
where
\begin{equation}
L_{i}(x_{i},t)=f_{i}(x_{i})-\frac{\alpha}{t+1}ln\left(-g_{i}(x_{i})\right),\label{eq:NewObjectiveFunc}
\end{equation}
 and
\begin{equation}
r_{i}=-\beta_{1}\sum_{j\in\mathcal{{N}}_{i}}tanh\beta_{2}(x_{i}-x_{j}),\label{eq:1consensus_prot}
\end{equation}
 in which $\beta_{1},\beta_{2}\in\mathbb{R}^{+}$.

Note that the control law \eqref{eq:1Cont Law} consists of two
parts: the first term is to minimize the local objective function,
and the second term is associated with the consensus
error.

First of all, we illustrate through the following lemma that the positions
of agents, i.e. $x_{i}$, $i\in\mathcal{N}$, reach a consensus
value under the control law \eqref{eq:1Cont Law}. In
the following, we introduce the notion of practical consensus. This
helps us to later show that all agents attain the same position perhaps
with arbitrarily small error.
\begin{definition}
{A network of agents with single-integrator dynamics
as \eqref{eq:1Agents_Dynamics} are said to achieve a }\textit{{practical
consensus if $\left|x_{i}(t)-x_{j}(t)\right|\leq\delta_{0}$}}, {$\forall i,j\in\mathcal{N}$
for an arbitrarily small $\delta_{0}$.}\end{definition}
\begin{lemma}
\label{prop:consensustheorem}Consider Assumptions 1.a and 3. \textcolor{black}{If
$\left|\omega_{i}-\omega_{j}\right|<\omega_{0}$, $\forall i,j\in\mathcal{N}$,
with $\omega_{i}=-\left(\frac{\partial^{2}L_{i}(x_{i},t)}{\partial x_{i}^{2}}\right)^{-1}\left(\frac{\partial L_{i}(x_{i},t)}{\partial x_{i}}+\frac{\partial^{2}L_{i}(x_{i},t)}{\partial t\partial x_{i}}\right)$,
and $\beta_{1}\sqrt{\lambda_{2}(L)}>\omega_{0}$, then, }there exist
some $t_{k}$ and $\delta_{0}>0$ such that the positions of the agents
with dynamics \eqref{eq:1Agents_Dynamics} {under
the control law \eqref{eq:1Cont Law} yield practical consensus,
i.e. $\left|x_{i}(t)-x_{j}(t)\right|\leq\delta_{0}$}{,
$\forall i,j\in\mathcal{N}$, }{for $t>t_{k}$}.\end{lemma}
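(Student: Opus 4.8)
The plan is to establish practical consensus by a Lyapunov argument on the quadratic consensus-error function, and then to invoke the ultimate-boundedness result of Lemma~\ref{lem:stability}. Stacking the closed loop as $\dot{\bar{x}} = \omega + r$, where $\omega=[\omega_1,\dots,\omega_N]^\top$ collects the local optimization terms and $r=[r_1,\dots,r_N]^\top$ is the consensus protocol \eqref{eq:1consensus_prot}, I would take $V=\tfrac12\bar{e}_x^\top\bar{e}_x$ with $\bar{e}_x=\Pi\bar{x}$. Since $\Pi$ is symmetric idempotent and $\dot{\bar{e}}_x=\Pi\dot{\bar{x}}$, one gets $\dot{V}=\bar{e}_x^\top(\omega+r)=\bar{e}_x^\top\omega-\beta_1\bar{e}_x^\top\Phi$, where $\Phi_i=\sum_{j\in\mathcal{N}_i}\tanh\beta_2(x_i-x_j)$. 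Assumption~1.a guarantees each $\omega_i$ is well defined on the strictly feasible set and that the closed loop is locally Lipschitz, while Assumption~3 makes the graph connected so that $\lambda_2(L)>0$. The proof then reduces to bounding the consensus dissipation $\bar{e}_x^\top\Phi$ from below and the optimization disturbance $\bar{e}_x^\top\omega$ from above.

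For the consensus term I would first use the symmetry of the undirected graph (each edge contributes twice with opposite sign in the odd function $\tanh$) to rewrite $\bar{e}_x^\top\Phi=\sum_{(\vartheta_i,\vartheta_j)\in\mathcal{E}}(x_i-x_j)\tanh\beta_2(x_i-x_j)$; the $\tfrac1N\mathbf{1}\mathbf{1}^\top$ part of $\Pi$ drops out because $\sum_i\Phi_i=0$. Each summand is nonnegative since $y\tanh(\beta_2 y)\ge0$. The key quantitative step is a desaturation bound: since $y\tanh(\beta_2 y)\ge|y|-c(\beta_2)$ with a constant $c(\beta_2)$ of order $1/\beta_2$ that vanishes as $\beta_2\to\infty$, one obtains $\bar{e}_x^\top\Phi\ge\sum_{\mathcal{E}}|x_i-x_j|-|\mathcal{E}|c(\beta_2)=\|D^\top\bar{x}\|_1-|\mathcal{E}|c(\beta_2)$. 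Using $\|D^\top\bar{x}\|_1\ge\|D^\top\bar{x}\|$ and $\|D^\top\bar{x}\|^2=\bar{e}_x^\top L\bar{e}_x$ together with the Courant--Fischer formula (Lemma~\ref{lem:Courant-Fischer}) applied on the subspace orthogonal to $\mathbf{1}$, where $\bar{e}_x$ lives, gives $\|D^\top\bar{x}\|\ge\sqrt{\lambda_2(L)}\,\|\bar{e}_x\|$. Hence $\beta_1\bar{e}_x^\top\Phi\ge\beta_1\sqrt{\lambda_2(L)}\,\|\bar{e}_x\|-\beta_1|\mathcal{E}|c(\beta_2)$, i.e. the consensus force grows at least linearly in $\|\bar{e}_x\|$ despite the saturation.

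For the disturbance term I would exploit $\mathbf{1}^\top\bar{e}_x=0$ to write $\bar{e}_x^\top\omega=\bar{e}_x^\top\Pi\omega$ and bound it by $\|\bar{e}_x\|\,\|\Pi\omega\|$ via Cauchy--Schwarz; the hypothesis $|\omega_i-\omega_j|<\omega_0$ controls the centered vector $\Pi\omega$ and yields $\bar{e}_x^\top\omega\le\omega_0\|\bar{e}_x\|$. Combining the two estimates gives $\dot{V}\le-\big(\beta_1\sqrt{\lambda_2(L)}-\omega_0\big)\|\bar{e}_x\|+\beta_1|\mathcal{E}|c(\beta_2)$. Under the standing hypothesis $\beta_1\sqrt{\lambda_2(L)}>\omega_0$ the linear coefficient is strictly negative, so $\dot{V}<0$ whenever $\|\bar{e}_x\|>\mu$ with $\mu:=\beta_1|\mathcal{E}|c(\beta_2)/\big(\beta_1\sqrt{\lambda_2(L)}-\omega_0\big)$. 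Choosing $W_1=W_2=V$ (radially unbounded) and $W_3$ proportional to $\|\bar{e}_x\|$, Lemma~\ref{lem:stability} then yields a finite time $t_k$ after which $\bar{e}_x$ stays inside a ball whose radius is an explicit increasing function of $\mu$; translating $\|\bar{e}_x\|$ back into the pairwise gaps $|x_i-x_j|$ produces the claimed $\delta_0$, and since $\mu\to0$ as $\beta_2\to\infty$, this $\delta_0$ can be made arbitrarily small.

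The step I expect to be the main obstacle is exactly the consensus lower bound, where the bounded, saturating $\tanh$ nonlinearity must still be shown to dominate the optimization disturbance for large errors: one must pass from the node-wise protocol to the edge-sum form, quantify the saturation defect uniformly, and then recover a genuine $\sqrt{\lambda_2(L)}$ slope through Courant--Fischer. Making the coefficient condition $\beta_1\sqrt{\lambda_2(L)}>\omega_0$ govern the linear competition \emph{exactly}---which requires careful bookkeeping of the constant relating $\|\Pi\omega\|$ to $\omega_0$---while ensuring the additive term $c(\beta_2)$ is the sole source of the residual (so that $\delta_0$ is tunable through $\beta_2$) is the delicate part; the closing invocation of Lemma~\ref{lem:stability} is then routine.
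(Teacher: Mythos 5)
Your proposal is correct and follows essentially the same route as the paper's own proof: the quadratic Lyapunov function $V=\tfrac12\bar{e}_x^\top\bar{e}_x$ on the projected error, the Polycarpou-type desaturation bound $y\tanh(\beta_2 y)\ge|y|-O(1/\beta_2)$ (the paper uses the constant $0.2785/\beta_2$), the norm chain $\|D^\top\bar{e}_x\|_1\ge\|D^\top\bar{e}_x\|$ followed by Courant--Fischer to extract the $\sqrt{\lambda_2(L)}$ slope, and the closing appeal to Lemma~\ref{lem:stability} with $\delta_0\to0$ as $\beta_2\to\infty$. Even the subtlety you flag---the constant relating $\|\Pi\Omega\|$ to $\omega_0$, which strictly carries a factor of order $\sqrt{N}$---is passed over in the paper in exactly the same way, so your treatment matches the published argument step for step.
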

\begin{proof}
The aggregate dynamics of agents \eqref{eq:1Agents_Dynamics} under
the control law \eqref{eq:1Cont Law} can be written as
\begin{equation}
\dot{\bar{x}}=-\beta_{1}Dtanh\left(\beta_{2}D^{\top}\bar{x}\right)+\Omega,\label{eq:1ConsLemma1}
\end{equation}
where $\varOmega=\left[\omega_{1}\ldots\omega_{N}\right]^{\top}$.
Let the network's consensus error be defined as $\bar{e}_{x}=\Pi\bar{x}$.
{Hence, one attains}

\begin{equation}
\dot{\bar{e}}_{x}=-\beta_{1}Dtanh\left(\beta_{2}D^{\top}\bar{e}_{x}\right)+\Pi\Omega.\label{eq:1ConsLemma2}
\end{equation}
Choose the Lyapanov candidate function
\begin{equation}
V(\bar{e}_{x})=\frac{1}{2}\bar{e}_{x}^{\top}\bar{e}_{x}.\label{eq:1ConsLemma3}
\end{equation}
By taking time derivative from $V(\bar{e}_{x})$ along the trajectories
of $\bar{e}_{x}$, it holds that

\begin{equation}
\dot{V}(\bar{e}_{x})=-\beta_{1}\bar{e}_{x}^{\top}D\,tanh\left(\beta_{2}D^{\top}\bar{e}_{x}\right)+\bar{e}_{x}^{\top}\Pi\Omega.
\end{equation}
Define $\bar{y}=D^{\top}\bar{e}_{x}$, where $\bar{y}=[y_{1}\ldots y_{N}]^{\top}$.
Then, {it is easy to see} that $\bar{y}^{\top}tanh(\beta_{2}\bar{y})=\sum_{i}y_{i}tanh(\beta_{2}y_{i})$.
From the inequality $-\eta tanh(\frac{\eta}{\epsilon})+\left|\eta\right|<0.2785\epsilon$
for some $\epsilon,\eta\in\mathbb{{R}}$ \cite{polycarpou1993robust},
it is straightforward to show that $-\bar{e}_{x}^{\top}D\,tanh\left(\beta_{2}D^{\top}\bar{e}_{x}\right)<-\left\Vert D^{\top}\bar{e}_{x}\right\Vert _{1}+\frac{N}{\beta_{2}}0.2785$.
Thus,

\begin{eqnarray}
\dot{V}(\bar{e}_{x}) & < & -\beta_{1}\left\Vert D^{\top}\bar{e}_{x}\right\Vert _{1}+\frac{\beta_{1}N}{\beta_{2}}0.2785+\left\Vert \bar{e}_{x}\right\Vert \left\Vert \Pi\Omega\right\Vert ,\\
 & \leq & -\beta_{1}\left\Vert D^{\top}\bar{e}_{x}\right\Vert +\frac{\beta_{1}N}{\beta_{2}}0.2785+\left\Vert \bar{e}_{x}\right\Vert \left\Vert \Pi\Omega\right\Vert ,
\end{eqnarray}
The second inequality arises from the inequality $\left\Vert \cdot\right\Vert \leq\left\Vert \cdot\right\Vert _{1}$. Then, from the assumption
$\left|\omega_{i}-\omega_{j}\right| <\omega_{0},\,\forall i,j\in\mathcal{N}$,
we conclude that

\begin{equation}
\dot{V}(\bar{e}_{x})<-\beta_{1}\sqrt{\bar{e}_{x}^{\top}DD^{\top}\bar{e}_{x}}+\frac{\beta_{1}N}{\beta_{2}}0.2785+\left\Vert \bar{e}_{x}\right\Vert \omega_{0}.
\end{equation}
According to Lemma \ref{lem:Courant-Fischer}, one can observe that
$\bar{e}_{x}^{\top}DD^{\top}\bar{e}_{x}\geq\lambda_{2}(L)\left\Vert \bar{e}_{x}\right\Vert ^{2}$, thus,

\[
\dot{V}(\bar{e}_{x})<-\beta_{1}\sqrt{\lambda_{2}(L)}\left\Vert \bar{e}_{x}\right\Vert +\frac{\beta_{1}N}{\beta_{2}}0.2785+\left\Vert \bar{e}_{x}\right\Vert \omega_{0}.
\]

From the statement of Lemma, we have $\beta_{1}\sqrt{\lambda_{2}(L)}>\omega_{0}$.
For $\left\Vert \bar{e}_{x}\right\Vert >\frac{\frac{\beta_{1}N}{\beta_{2}}0.2785}{\beta_{1}\sqrt{\lambda_{2}(L)}-\omega_{0}}$,
we obtain $\dot{V}(\bar{e}_{x})<0$. Now, we are ready to invoke
Lemma \ref{lem:stability}. It guarantees that by choosing $\beta_{2}$
large enough, one can make the consensus error $\delta_{0}$ as small
as desired. {Thus}, the proof is concluded.
\end{proof}

\begin{remark}
The assumption {$\left|\omega_{i}-\omega_{j}\right|<\omega_{0}$,}
$\forall i,j\in\mathcal{N}$,{ in Lemma \ref{prop:consensustheorem}
may seem unreasonable } {as} {it
implies boundedness of agents' positions, $x_{i}$, $i\in\mathcal{N}$.
In the following lemma, we {demonstrate}
that the agents' positions {indeed}
stay bounded. { It is worth mentioning that, by choosing a conservative bound on $\omega_{0}$
one can adjust the protocol's parameters to reach consensus with desired accuracy as we already showed in the proof of Lemma \ref{prop:consensustheorem}.}}
\end{remark}
\begin{lemma}
\label{lem:Boundedness}Consider the dynamics \eqref{eq:1Agents_Dynamics}
driven by the control law \eqref{eq:1Cont Law}. Then, under Assumptions
1.a and 3, the solutions of \eqref{eq:1Agents_Dynamics} are globally
bounded.\end{lemma}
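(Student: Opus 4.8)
The plan is to track the scalar $s_i(t):=\partial L_i(x_i(t),t)/\partial x_i$, the local barrier-objective gradient, show that it stays bounded, and then recover boundedness of $x_i$ from the strict convexity and radial unboundedness of $L_i(\cdot,t)$ (Assumption 1.a). First I would differentiate $s_i$ along the closed loop. Since $\dot s_i=H_i\dot x_i+\partial^2 L_i/(\partial t\,\partial x_i)$ with $H_i:=\partial^2 L_i/\partial x_i^2$, substituting the control law \eqref{eq:1Cont Law} makes the feedforward term $-H_i^{-1}\bigl(s_i+\partial^2 L_i/(\partial t\,\partial x_i)\bigr)$ turn the gradient into $-s_i$ and exactly cancel the explicit time-drift, leaving the clean first-order form
\[
\dot s_i=-s_i+H_i\,r_i,\qquad i=1,\dots,N.
\]
By Assumption 1.a, $f_i$ is strictly convex and the logarithmic barrier is convex on the strictly feasible set, so $H_i>0$ and the unforced dynamics $\dot s_i=-s_i$ is exponentially contracting: the optimization part of the control law acts as a stable filter driving each local gradient to zero.

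Next I would record that the consensus input is uniformly bounded, $|r_i|\le\beta_1|\mathcal{N}_i|\le\beta_1(N-1)$, because $|\tanh(\cdot)|\le 1$. Treating $H_i r_i$ as a perturbation, I would take the Lyapunov candidate $V=\tfrac12\sum_{i=1}^N s_i^2$, with $\dot V=-\sum_i s_i^2+\sum_i s_i H_i r_i$. The first term is negative definite in $s=(s_1,\dots,s_N)$, so if the cross term admits a bound of the form $\sum_i s_i H_i r_i\le\tfrac12\sum_i s_i^2+C$, then $\dot V\le-\tfrac12\sum_i s_i^2+C<0$ for $\|s\|$ large and Lemma \ref{lem:stability} gives ultimate boundedness of $s$, hence of $x$. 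Equivalently, one may track the deviation $e_i:=x_i-x_i^{\ast}(t)$ from the central-path minimizer $x_i^{\ast}(t)$ of $L_i(\cdot,t)$; since the barrier weight $\alpha/(t+1)\to0$, the path $x_i^{\ast}(t)$ converges to the constrained minimizer of $f_i$ and is therefore bounded, so boundedness of $x_i$ reduces to an input-to-state estimate for $e_i$ driven by the bounded input $r_i$.

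The main obstacle is precisely the cross term $\sum_i s_i H_i r_i$: although $r_i$ is bounded, the Hessian $H_i$ is not, since the barrier forces $H_i\to+\infty$ as $x_i$ approaches the boundary $g_i(x_i)=0$ (where $H_i$ in fact grows like $s_i^2$), so $H_i r_i$ cannot be bounded a priori and the Lyapunov estimate cannot be closed without first confining the trajectories to a compact subset of the strictly feasible set, bounded away from each boundary. I would resolve this by exploiting the Newton structure: near a boundary the feedforward rescales the large barrier gradient by the even larger Hessian, so the optimization velocity $\omega_i$ vanishes there, while the strict convexity of $f_i$ supplies a restoring drift that, together with the sign-definite aggregate consensus contribution $\sum_i x_i r_i=-\beta_1\sum_{\{i,j\}\in\mathcal{E}}(x_i-x_j)\tanh\bigl(\beta_2(x_i-x_j)\bigr)\le0$, prevents escape both to infinity and to the barrier. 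Exhibiting an explicit invariant feasible region on which $H_i$ is uniformly bounded, so that the perturbation $H_i r_i$ becomes controllable, is the crux of the argument.
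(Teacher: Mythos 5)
Your derivation of the gradient dynamics $\dot s_i=-s_i+H_i r_i$ is correct, and you have correctly located the obstruction: the perturbation $H_i r_i$ cannot be bounded a priori, since the barrier Hessian $H_i$ blows up as $x_i$ approaches $\{g_i=0\}$. But the proposal never overcomes this obstruction --- you yourself write that exhibiting an invariant region on which $H_i$ is uniformly bounded ``is the crux of the argument,'' and that crux is left as intuition (the Newton rescaling ``vanishes'' near the boundary, strict convexity ``supplies a restoring drift''). None of these claims is backed by an estimate, and the identity $\sum_i x_i r_i=-\beta_1\sum_{\{i,j\}\in\mathcal{E}}(x_i-x_j)\tanh\bigl(\beta_2(x_i-x_j)\bigr)\le 0$ concerns the derivative of $\tfrac12\|\bar x\|^2$, not of the function $V=\tfrac12\sum_i s_i^2$ you actually differentiate, so it cannot be used to close that estimate. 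As written, the argument is incomplete exactly at its decisive step. There is also a second, independent gap at the end: even if $s_i$ were shown bounded, boundedness of $x_i$ does not follow from strict convexity plus radial unboundedness. In one dimension, $f(x)=\sqrt{1+x^2}$ is strictly convex and radially unbounded yet has gradient bounded by $1$; on an unbounded feasible set (e.g.\ $g_i(x)=-x$) a uniform bound on $s_i=\partial L_i/\partial x_i$ then confines nothing. One would need a coercive gradient (e.g.\ strong convexity), which Assumption 1 does not supply.

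The paper's proof sidesteps both difficulties by never passing to gradient coordinates. It takes $W(\bar x)=\tfrac12\|\bar x-\bar x^*\|^2$ centered at the minimizer $\bar x^*$ of $\sum_i L_i(\cdot,t)$, and observes that the Newton term contributes non-positively: in one dimension $\bigl(\partial^2 L_i/\partial x_i^2\bigr)^{-1}>0$ is merely a positive scalar factor, and the first-order convexity inequality applied to $\hat L_i(x_i,t)=f_i(x_i)-\frac{t+\alpha-1}{(t+1)^2}\ln\left(-g_i(x_i)\right)$ gives $-(x_i-x_i^*)\,\partial\hat L_i/\partial x_i\le \hat L_i(x_i^*,t)-\hat L_i(x_i,t)\le 0$. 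Thus only the \emph{sign} of the Hessian enters, never its magnitude, which is precisely what defeats your unbounded-$H_i$ problem. The consensus term is then bounded by a negative multiple of $\|D^\top\bar x\|$ plus constants, using $|\tanh(\cdot)|\le 1$ and $-\eta\tanh(\eta/\epsilon)+|\eta|<0.2785\,\epsilon$, and Lemma \ref{lem:stability} delivers boundedness. If you want to salvage your route, the lesson to import is that the Hessian inverse should be exploited as a sign-definite scalar weight inside a state-space Lyapunov function, not estimated in norm as a perturbation.
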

\begin{proof}
We study boundedness of the solutions of dynamics \eqref{eq:1Agents_Dynamics}
under the control law \eqref{eq:1Cont Law} via the Lyapunov stability
analysis. Let us define a quadratic Lyapunov function as
\begin{equation}
W(\bar{x})=\frac{1}{2}(\bar{x}-\bar{x}^{*})^{\top}(\bar{x}-\bar{x}^{*}),\label{eq:LyapFunBounded}
\end{equation}
where $\bar{x}^{*}\in\mathbb{{R}}^{n}$ is the optimum point for the
convex function $\sum_{i=1}^{N}L_{i}(x_{i},t)$. Let us take derivative
from both sides of \eqref{eq:LyapFunBounded} along the trajectories
\eqref{eq:1Agents_Dynamics} %under the control law \eqref{eq:1Cont Law}
with respect to time. Then, we obtain
\begin{align}
\dot{W}(\bar{x}) & =(\bar{x}-\bar{x}^{*})^{\top}\dot{\bar{x}}\nonumber \\
 & =-\sum_{i=1}^{N}\left(x_{i}-x_{i}^{*}\right)\left(\frac{\partial L_{i}(x_{i},t)}{\partial x_{i}}+\frac{\partial^{2}L_{i}(x_{i},t)}{\partial t\partial x_{i}}\right)\left(\frac{\partial^{2}L_{i}(x_{i},t)}{\partial x_{i}^{2}}\right)^{-1}\nonumber \\
 & -\beta_{1}(\bar{x}-\bar{x}^{*})^{\top}Dtanh\beta_{2}D^{\top}\bar{x}\\
 & =-\sum_{i=1}^{N}\left(x_{i}-x_{i}^{*}\right)\left({\displaystyle \frac{\partial f_{i}(x_{i})}{\partial x_{i}}}-{\displaystyle \frac{t+\alpha-1}{\left(t+1\right)^{2}}\frac{{\scriptstyle {\displaystyle {\displaystyle \frac{\partial g_{i}(x_{i})}{\partial x_{i}}}}}}{g_{i}(x_{i})}}\right)\left(\frac{\partial^{2}L_{i}(x_{i},t)}{\partial x_{i}^{2}}\right)^{-1}\nonumber \\
 & -\beta_{1}(\bar{x}-\bar{x}^{*})^{\top}Dtanh\beta_{2}D^{\top}\bar{x}.\label{eq:1BoundedLemma2}
\end{align}
In order to obtain \eqref{eq:1BoundedLemma2},
we substituted $L_{i}(x_{i},t)$ from \eqref{eq:NewObjectiveFunc}.
Define $\hat{L}_{i}(x_{i},t)=f_{i}(x_{i})-\frac{t+\alpha-1}{\left(t+1\right)^{2}}ln\left(-g_{i}(x_{i})\right)$.
{Note that the function $\hat{L}_{i}(x_{i},t)$
is strictly convex in $x_{i}$ as }\textcolor{black}{$\alpha>1$.
Let the minimizer of $\hat{L}_{i}(x_{i},t)$ be $\hat{x}_{i}^{*}$.
One can deduce that} {the minimizers
of $\hat{L}_{i}(x_{i},t)$ and $L_{i}(x_{i},t)$
are the same, i.e. $\hat{x}_{i}^{*}=x_{i}^{*}$. On the other hand,
due to convexity of $\hat{L}_{i}(x_{i},t)$ in $x_{i}$, it holds
that $-\left(x_{i}-x_{i}^{*}\right)\frac{\partial\hat{L}_{i}(x_{i},t)}{\partial x_{i}}<\hat{L}_{i}(x_{i}^{*},t)-\hat{L}_{i}(x_{i},t)$,
$i=1,\ldots,N$. As the inequality $\hat{L}_{i}(x_{i}^{*},t)\leq\hat{L}_{i}(x_{i},t)$
holds for any $x_{i}$, it can be inferred that the first term on
the right side of the equality \eqref{eq:1BoundedLemma2} is non-positive.
Thus,} we obtain

\begin{eqnarray}
\dot{W}(\bar{x}) & \leq & -\beta_{1}(\bar{x}-\bar{x}^{*})^{\top}Dtanh\beta_{2}D^{\top}\bar{x}\nonumber \label{eq:1BoundedLemma}\\
 & = & -\beta_{1}\bar{x}^{\top}Dtanh\beta_{2}D^{\top}\bar{x}+\beta_{1}\bar{x}^{*\top}Dtanh\beta_{2}D^{\top}\bar{x}\\
 & \leq & -\beta_{1}\left\Vert D^{\top}\bar{x}\right\Vert _{1}+\frac{0.2785\beta_{1}N}{\beta_{2}}+\beta_{1}\left\Vert D^{\top}\bar{x}^{*}\right\Vert
\end{eqnarray}
The last \textcolor{black}{inequality} arises from the inequalities
$-\eta tanh(\frac{\eta}{\epsilon})+\left|\eta\right|<0.2785\epsilon$,
with $\epsilon, \eta\in\mathbb{{R}}$, which was introduced
in Lemma \ref{prop:consensustheorem}, and $\left\Vert tanh(\cdot)\right\Vert \leq1$. Furthermore,
one can say that $\left\Vert D^{\top}\bar{x}^{*}\right\Vert \leq d$,
with $d\in\mathbb{R}$. Therefore,
\begin{eqnarray}
\dot{W}(\bar{x}) & \leq & -\beta_{1}\left\Vert D^{\top}\bar{x}\right\Vert +\frac{0.2785\beta_{1}N}{\beta_{2}}+\beta_{1}d\label{eq:boundedEnd}\\
 & = & -\beta_{1}\sqrt{\bar{x}DD^{\top}\bar{x}}+\frac{0.2785\beta_{1}N}{\beta_{2}}+\beta_{1}d\\
 & = & -\theta_{1}\left\Vert \bar{x}\right\Vert +\left(\theta_{1}-\beta_{1}\sqrt{\lambda_{2}(DD^{\top})}\right)\left\Vert \bar{x}\right\Vert +\frac{0.2785\beta_{1}N}{\beta_{2}}+\beta_{1}d,\,\\
 & \leq & -\theta_{1}\left\Vert \bar{x}\right\Vert ,\,\,\,\forall\bar{x}\in\mathcal{{B}}.
\end{eqnarray}
where $\mathcal{{B}}=\left\{ \bar{x}\in\mathbb{{R}}^{N}|\left\Vert \bar{x}\right\Vert \geq\frac{\frac{0.2785\beta_{1}N}{\beta_{2}}+d\beta_{1}}{\beta_{1}\sqrt{\lambda_{2}(L)}-\theta_{1}}\right\} $ and $0<\theta_{1}<1$.
Now, by Lemma \ref{lem:stability}, it is certified
that $\bar{x}$ remains bounded.
\end{proof}

For the rest of this section, we found it convenient and illustrative to split our analysis
into two parts. We first study the case when all agents share
a common constraint, i.e. {$g_{i}(\cdot)=g_{j}(\cdot),\,\forall i,j\in\mathcal{N}$},
and then attend to the case when the agents have distinct constraints.

\subsection{Case I: Interconnected Agents with Common Constraints}

Here, {we assume that $g_i(\cdot) = g(\cdot), \forall i\in\mathcal{N}$, where $g:\mathbb{{R}}\rightarrow\mathbb{R}$,
represents a common twice differentiable convex inequality constraint} associated with all agents, and present a theorem which asserts that
the control law \eqref{eq:1Cont Law} drives all the agents to the
optimal solution of the \textcolor{black}{optimization} problem \textcolor{black}{\eqref{eq:1BarrierProb2}. }
\begin{theorem}
\label{thm:Theorem1} Assume that Assumptions 1, 2, and 3 hold. Then,
under the control law \eqref{eq:1Cont Law}, \textcolor{black}{agents
with dynamics} \eqref{eq:1Agents_Dynamics} will converge to a point
that is the solution to the optimization problem \textcolor{black}{\eqref{eq:1BarrierProb2}}
if $\frac{\partial^{2}f_{i}(x_{i})}{\partial x_{i}^{2}}=\frac{\partial^{2}f_{j}(x_{j})}{\partial x_{j}^{2}}$
$\forall i,j\in\mathcal{N} $. \end{theorem}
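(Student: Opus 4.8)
The plan is to combine the two preceding lemmas with a clean gradient-tracking identity. First I would invoke Lemma~\ref{lem:Boundedness}, which holds unconditionally under Assumptions~1.a and 3, to conclude that $\bar{x}$ stays in a compact set; consequently each Newton-type term $\omega_i$ and each local curvature $\partial^{2}L_i/\partial x_i^{2}$ remains bounded, which furnishes a finite $\omega_0$ with $\left|\omega_i-\omega_j\right|<\omega_0$. After fixing $\beta_1$ large enough that $\beta_1\sqrt{\lambda_2(L)}>\omega_0$, the hypotheses of Lemma~\ref{prop:consensustheorem} are satisfied, so the states reach practical consensus, $\left|x_i-x_j\right|\le\delta_0$ for $t>t_k$, with $\delta_0$ made arbitrarily small by enlarging $\beta_2$. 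This resolves the circularity flagged in the Remark: boundedness is established independently of consensus.

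The core computation I would carry out is on the local optimality residual $h_i:=\partial L_i(x_i,t)/\partial x_i$. Differentiating along \eqref{eq:1Agents_Dynamics}--\eqref{eq:1Cont Law} and substituting the definition of $\omega_i$, the time-varying term $\partial^{2}L_i/\partial t\partial x_i$ and the gradient term cancel by design of the control law, leaving the identity $\dot h_i=-h_i+\big(\partial^{2}L_i/\partial x_i^{2}\big)\,r_i$. Summing over $i$ gives $\tfrac{d}{dt}\sum_i h_i=-\sum_i h_i+\sum_i\big(\partial^{2}L_i/\partial x_i^{2}\big)r_i$. Here the special hypotheses enter decisively: with a common constraint $g_i=g$ and equal curvatures $\partial^{2}f_i/\partial x_i^{2}$, at a common position all Hessians $\partial^{2}L_i/\partial x_i^{2}$ collapse to a single scalar $H$, so $\sum_i\big(\partial^{2}L_i/\partial x_i^{2}\big)r_i=H\sum_i r_i$; and since the protocol \eqref{eq:1consensus_prot} is antisymmetric over the undirected edge set, $\sum_i r_i=0$. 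Thus on the consensus manifold the coupling term drops out and $\tfrac{d}{dt}\sum_i h_i=-\sum_i h_i$, i.e. $\sum_i h_i\to0$ exponentially.

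I would then combine this with the consensus conclusion: as all $x_i$ approach a common value $\tilde{x}^{*}$, the limit $\sum_i h_i\to0$ becomes $\sum_i \partial L_i(\tilde{x}^{*},t)/\partial x=0$, which is precisely the centrality condition \eqref{eq:1OptCondition}. Strict convexity of $\sum_i L_i$ (Assumption~1.a together with $\alpha>1$, with radial unboundedness from Assumption~1.b) makes this stationary point the unique minimizer of \eqref{eq:1BarrierProb2}, so the agreed position is the sought solution.

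The main obstacle is that the cancellation $\sum_i\big(\partial^{2}L_i/\partial x_i^{2}\big)r_i=H\sum_i r_i$ is exact only on the consensus manifold; under practical consensus the curvatures differ by $O(\delta_0)$, so this term is not identically zero but is bounded by a multiple of $\delta_0\,\|r\|$. I would therefore treat $\sum_i h_i$ through the scalar comparison $\tfrac{d}{dt}\sum_i h_i\le -\sum_i h_i+c\,\delta_0$ and invoke Lemma~\ref{lem:stability} once more to obtain ultimate boundedness of $\sum_i h_i$ in a ball whose radius vanishes as $\delta_0\to0$ (equivalently $\beta_2\to\infty$). Closing the loop between the consensus accuracy $\delta_0$ and the residual in the centrality condition, while keeping the earlier bound $\omega_0$ consistent with the enlarged $\beta_1,\beta_2$, is the delicate bookkeeping step.
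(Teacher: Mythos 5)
Your central computation is, in substance, the paper's own proof. The identity $\dot h_i=-h_i+\bigl(\partial^{2}L_i/\partial x_i^{2}\bigr)r_i$, summed over $i$ and combined with the equal-curvature hypothesis and $\sum_i r_i=0$, is exactly what the paper obtains by differentiating $V=\tfrac12\bigl(\sum_i\partial L_i/\partial x_i\bigr)^{2}$ to get $\dot V=-\bigl(\sum_i\partial L_i/\partial x_i\bigr)^{2}$; you read the same cancellation as a linear scalar ODE (exponential decay), while the paper routes through $\mathcal L^{2}$/$\mathcal L^{\infty}$ bounds and Barbalat's lemma. Your observation that the cancellation is exact only where the Hessians actually coincide --- even with a common $g$, the barrier curvature $-\frac{\alpha}{t+1}\frac{d^{2}}{dx^{2}}\ln\left(-g(x)\right)$ is evaluated at different points $x_i$ --- is a genuine refinement: the paper asserts $\partial^{2}L_i/\partial x_i^{2}=\partial^{2}L_j/\partial x_j^{2}$ outright, and your $O(\delta_0)$ perturbation treatment is the honest repair of that step.

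The genuine gap is that you prove only half of the optimality conditions \eqref{eq:1OptCondition}: the gradient (centrality) part, and never the feasibility part. The paper's proof has a second half, by contradiction: if $g\left(x_i(0)\right)<0$ and $g\left(x_i(\cdot)\right)$ reached zero at some finite $t_0$, then $\sum_i\partial L_i/\partial x_i$ would blow up at $t_0$, contradicting the $\mathcal L^{\infty}$ bound furnished by $\dot V\le 0$; hence $g\left(x_i(t)\right)<0$ for all $t>0$. This step is not cosmetic. The barrier $-\ln\left(-g(x_i)\right)$, hence $L_i$, $h_i$, the control law \eqref{eq:1Cont Law}, and your ODE identity are defined only while $g\left(x_i(t)\right)<0$, so without ruling out finite-time escape from the strictly feasible set your exponential-decay argument can cease to be valid in finite time, and the limit point is not guaranteed to satisfy $g\le 0$, i.e.\ to solve \eqref{eq:1BarrierProb2} at all. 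The same issue undercuts your opening step: concluding from Lemma \ref{lem:Boundedness} that the $\omega_i$ are bounded (so as to certify the hypothesis of Lemma \ref{prop:consensustheorem}) requires handling the barrier singularity at $g(x_i)=0$, since boundedness of the positions alone says nothing about the terms of the form $\frac{\alpha}{t+1}\,g'(x_i)/g(x_i)$ entering $\omega_i$. Importing the paper's contradiction argument --- your $\mathcal L^{\infty}$ control of $\sum_i h_i$ is precisely the ingredient it needs --- would close the gap.
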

\begin{proof}
Define the candidate time-varying Lyapunov function as
\begin{equation}
V(\bar{x},t)=\frac{1}{2}\left(\sum_{i=1}^{N}\frac{\partial L_{i}(x_{i},t)}{\partial x_{i}}\right)^{2}.\label{eq:1Theorem1}
\end{equation}
By taking derivative from $V(\bar{x},t)$ with respect to time along
with the trajectories described by \eqref{eq:1Agents_Dynamics} and
\eqref{eq:1Cont Law}, it holds that
\begin{eqnarray*}
\dot{V}(\bar{x},t) & = & \sum_{i=1}^{N}\frac{\partial L_{i}(x_{i},t)}{\partial x_{i}}\left(\sum_{i=1}^{N}\frac{\partial^{2}L_{i}(x_{i},t)}{\partial x_{i}^{2}}u_{i}+\frac{\partial^{2}L_{i}(x_{i},t)}{\partial t\partial x_{i}}\right)
\end{eqnarray*}
By substituting $u_{i}$ in the above equation \textcolor{black}{from}
\eqref{eq:1Cont Law}, we have
\begin{eqnarray}
\dot{V}(\bar{x},t) & =- & \left(\sum_{i=1}^{N}\frac{\partial L_{i}(x_{i},t)}{\partial x_{i}}\right)^{2}\label{eq:1Theorem2}\\
 & \leq & 0\label{eq:1Theorem3}
\end{eqnarray}
The equation \eqref{eq:1Theorem2} is certified by the assumption
$\frac{\partial^{2}f_{i}(x_{i})}{\partial x_{i}^{2}}=\frac{\partial^{2}f_{j}(x_{j})}{\partial x_{j}^{2}}$,
{$\forall i,j\in\mathcal{N}$}, (that results
in $\frac{\partial^{2}L_{i}(x_{i},t)}{\partial x_{i}^{2}}=\frac{\partial^{2}L_{j}(x_{j},t)}{\partial x_{j}^{2}},$$\forall i,j\in\mathcal{N}$)
and the fact that $\sum_{i=1}^{N}r_{i}=0$. From the inequality \eqref{eq:1Theorem3},
it holds that $\sum_{i=1}^{N}\frac{\partial L_{i}(x_{i},t)}{\partial x_{i}}$
remains bounded in $\mathbb{{R}}^{N}\bigcup\{\infty\}$, i.e. it belongs
to $\mathcal{{L}^{\infty}}$ space. { One can integrate both sides of
equality \eqref{eq:1Theorem2} with respect to time. Then, according to the inequality \eqref{eq:1Theorem3},
the following must hold}
\begin{equation}
\int_{0}^{R}\left(\sum_{i=1}^{N}\frac{\partial L_{i}(x_{i},t)}{\partial x_{i}}\right)^{2}dt=-\int_{0}^{R}\dot{V}(\bar{x},t)dt=-V(\bar{x}(R),R)+V(\bar{x}(0),0)\leq V(\bar{x}(0),0).\label{eq:2Thorem4}
\end{equation}
Hence, $\sum_{i=1}^{N}\frac{\partial L_{i}(x_{i},t)}{\partial x_{i}}\in\mathcal{{L}}^{2}$.
We now invoke Barbalat's Lemma \cite{tao1997simple}
and claims that $\sum_{i=1}^{N}\frac{\partial L_{i}(x_{i},t)}{\partial x_{i}}$
asymptotically converges to zero as $t\rightarrow\infty$. Thereby,
the first optimality condition in \eqref{eq:1OptCondition} is asymptotically
satisfied.

In the remainder the proof, we show that the second optimality condition
in \eqref{eq:1OptCondition} also
holds. Suppose that $g\left(x_{i}(0)\right)<0$, $\forall i\in\mathcal{N}$.
We will do the proof by contradiction to illustrate that $g\left(x_{i}(t)\right)<0$
for $t>0$. Assume that we had $g\left(x_{i}(t_{0}^{-})\right)<0$
and $g\left(x_{i}(t_{0}^{+})\right)>0$ for some $i$ and a finite
$t_{0}>0$. Due to continuity of the function $g\left(\cdot\right)$,
$g(x_{i}(t_{0}))$ would be zero. This implies that $\sum_{i=1}^{N}\frac{\partial L_{i}(x_{i},t)}{\partial x_{i}}$
becomes unbounded at $t_{0}$ that contradicts the fact that $\sum_{i=1}^{N}\frac{\partial L_{i}(x_{i},t)}{\partial x_{i}}\in\mathcal{{L}^{\infty}}$, { achieved earlier.
Hence, the inequality $g(x_{i}(t))<0$ with $g\left(x_{i}(0)\right)<0$
holds for $t>0$. Thereby, the proof is established.}

\end{proof}

One should note that through Lemma \ref{prop:consensustheorem}, we
showed practical consensus on states.
Furthermore, by Theorem \ref{thm:Theorem1}, we proved that the control
laws \eqref{eq:1Cont Law} solve the optimization problem \eqref{eq:Problem3}
on the conditions {that} $g_{i}(x_{i})=g_{j}(x_{j})$
{and} $\frac{\partial^{2}f_{i}(x_{i})}{\partial x_{i}^{2}}=\frac{\partial^{2}f_{j}(x_{j})}{\partial x_{j}^{2}},\,\forall i,j\in\mathcal{N}$.
The {condition} $\frac{\partial^{2}f_{i}(x_{i})}{\partial x_{i}^{2}}=\frac{\partial^{2}f_{j}(x_{j})}{\partial x_{j}^{2}}$,
$\forall i,j\in\mathcal{N}$, {may} first seem
strong; {however, it is} feasible in many problems,
e.g. the convex functions that belong to the set $\{f_{i}(\cdot)\mid f_{i}(x_{i})=(x_{i}-a_{i})^{2},a_{i}\in\mathbb{{R}}\}$
meet this requirement. To relax this assumption
and the condition of local constraints being the same, in
the following subsection, we will present an estimation-based approach
to solve the distributed optimization problem \eqref{eq:Dis Min Prob}.
This algorithm was initially proposed in \cite{rahili2015distributed}
and we adopt it here to relax the constraint $\frac{\partial^{2}L_{i}(x_{i},t)}{\partial x_{i}^{2}}=\frac{\partial^{2}L_{j}(x_{j},t)}{\partial x_{j}^{2}},\,\forall i,j\in\mathcal{N},$
which { may not be fulfilled in some cases.}

\subsection{Case II: Agetns with Distinct Constraints}

In the sequel, we first propose a centralized paradigm
to find the solution of a typical optimization problem associated
with a network under the graph $\mathcal{{G}}$.
Next, we adopt the technique of distributed average
tracking to estimate the parameters of the proposed centralized
control law in a { cooperative manner}. This approach drives {all}
the agents towards the solution {of} the optimization
problem \eqref{eq:1BarrierProb2} and also yields consensus.

Consider the single-integrator dynamics
\begin{equation}
\ensuremath{\begin{array}{l}
{\dot{x}}(t)={u}(t),\end{array}}\label{eq:1SingIntDym}
\end{equation}
where $u(t)\in\mathbb{{R}}$ and $x(t)\in\mathbb{{R}}$
denote the state and the control input, respectively. Consider an objective
function, say $Q(x,t):\mathbb{{R}\times\mathbb{{R}\rightarrow\mathbb{{R}}}}$,
that is twice continuously differentiable and strictly convex in $x$. { Moreover, it has one minimizer when $t\rightarrow\infty$, and its Hessian is invertible, i.e.
$\left(\frac{\partial^{2}Q(x,t)}{\partial x^{2}}\right)^{-1}, \forall{x},t,$ exists.}
In the following, we show that
\begin{equation}
u(t)=-\left(\frac{\partial^{2}Q(x,t)}{\partial x^{2}}\right)^{-1}\left(\frac{\partial Q(x,t)}{\partial x}+\frac{\partial^{2}Q(x,t)}{\partial x\partial t}\right)\label{eq:1CentLaw}
\end{equation}
 will make the dynamics \eqref{eq:1SingIntDym} converge to the minimizer
of the time-varying objective function $Q(x,t)$. Consider
the following Lyapunov function
\[
V(x,t)=\frac{1}{2}\left(\frac{\partial Q(x,t)}{\partial x}\right)^{2}
\]
and take its time derivative along the {trajectories}
of dynamics \eqref{eq:1SingIntDym}. Then, we have
\[
\dot{V}(x,t)=-\left(\frac{\partial Q(x,t)}{\partial x}\right)\left(\frac{\partial^{2}Q(x,t)}{\partial x^{2}}\dot{x}+\frac{\partial^{2}Q(x,t)}{\partial x\partial t}\right).
\]
 By substituting $\dot{x}$ in the above relation with \eqref{eq:1CentLaw},
the following is obtained
\[
\dot{V}(x,t)=-\left(\frac{\partial Q(x,t)}{\partial x}\right)^{2}\leq0.
\]
{Following the same reasoning as} in the proof of
Theorem \ref{thm:Theorem1}, it holds that ${\displaystyle \frac{\partial Q(x,t)}{\partial x}}\in\mathcal{{L}^{1}},\mathcal{{L}}^{2}$.
Then, by means of Barbalat's lemma, we have ${\displaystyle \frac{\partial Q(x,t)}{\partial x}}\rightarrow0$
as $t\rightarrow\infty$. {Thereby, the optimality
condition is} satisfied, i.e.
${\displaystyle \frac{\partial Q(x^{*},t)}{\partial x}=0}$.

Now, let us investigate a network of dynamical agents
with dynamics \eqref{eq:1Agents_Dynamics} {under
the topology} $\mathcal{{G}}$ {with} the collective
convex objective function $Q(x,t)=\sum_{i=1}^{N}L_{i}(x_{i},t)$.
From {the control law }\eqref{eq:1CentLaw}, one
{can readily} conclude that the control law
\begin{equation}
u_{i}(t)=-\left(\frac{\partial^{2}\sum_{i=1}^{N}L_{i}(x_{i},t)}{\partial x_{i}^{2}}\right)^{-1}\left(\frac{\partial\sum_{i=1}^{N}L_{i}(x_{i},t)}{\partial x_{i}}+\frac{\partial^{2}\sum_{i=1}^{N}L_{i}(x_{i},t)}{\partial x_{i}\partial t}\right)\label{eq:1CentLaw_Collective}
\end{equation}
{yields} the solution to the collective convex objective
function if Assumptions 1 and 2 hold. It is {apparent}
that the control law \eqref{eq:1CentLaw_Collective} is not locally
implementable since it requires the knowledge of the whole network
such as aggregate objective function $\sum_{i=1}^{N}L_{i}(x_{i},t)$. {With
the following algorithm, we provide an algorithm
that enables us to implement \eqref{eq:1CentLaw_Collective} in a
distributed manner such that the optimization problem }\eqref{eq:Dis Min Prob} is resolved.

As it follows, each agent generates an internal dynamics to obtain
the estimates of collective objective function's gradients and some other
terms, which are required for computation of \eqref{eq:1CentLaw_Collective}
{using} only local information. {Consider}
the following internal dynamics,

\begin{align}
\dot{\kappa}_{i}(t) & =-c\sum_{j\in\mathcal{{N}}_{i}}sgn\left(\nu{}_{i}(t)-\nu_{j}(t)\right),\label{eq:1EstDynamics1}
\end{align}
where
\begin{equation}
\nu_{i}(t)=\kappa_{i}(t)+\left[{\displaystyle \begin{array}{c}
\frac{\partial L_{i}(x_{i},t)}{\partial x_{i}}\\
\frac{\partial^{2}L_{i}(x_{i},t)}{\partial x_{i}\partial t}\\
\frac{\partial^{2}L_{i}(x_{i},t)}{\partial x_{i}^{2}}
\end{array}}\right].\label{eq:1EstDynamics2}
\end{equation}

From \eqref{eq:1EstDynamics1}, one obtains $\sum_{i=1}^{N}\dot{\kappa}_{i}(t)=0$.
Assume that $\kappa_{i}$, $\forall{i}\in\mathcal{N}$, are initialized such that
$\sum_{i=1}^{N}\kappa(0)=0$. Then, $\sum_{i=1}^{N}\kappa_{i}(t)=0$
is concluded for $t>0$. Hence, $\sum_{i=1}^{N}\nu_{i}(t)=\sum_{i=1}^{N}\left[{\displaystyle \begin{array}{c}
\frac{\partial L_{i}(x_{i},t)}{\partial x_{i}}\\
\frac{\partial^{2}L_{i}(x_{i},t)}{\partial x_{i}\partial t}\\
\frac{\partial^{2}L_{i}(x_{i},t)}{\partial x_{i}^{2}}
\end{array}}\right]$. It follows from Theorem 1 in \cite{chen2012distributed} that if
$c>\underset{t}{sup}\left\{ \left\Vert \kappa{}_{i}(x_{i},t)\right\Vert _{\infty}\right\} $,
$\forall i\in\mathcal{N}$, then consensus on $\text{\ensuremath{\nu}}_{i}(t)$,
$\forall{i}\in\mathcal{N}$, i.e. $\left|\nu_{i}(t)-\nu_{j}(t)\right|=0,\forall i,j\in\mathcal{N}$,
is achieved \textcolor{black}{over a finite time, say $T_{1}$.} With
$\nu_{i}(t)=\nu_{j}(t)$, {$\forall i,j\in\mathcal{N}$,}
the following holds,
\begin{equation}
\nu_{i}(t)=\frac{1}{N}\sum_{i=1}^{N}\left[{\displaystyle \begin{array}{c}
\nu_{i1}\\
\nu_{i2}\\
\nu_{i3}
\end{array}}\right],\label{eq:1EstDyn4}
\end{equation}
{where $\nu_{i1}=\frac{\partial L_{i}(x_{i},t)}{\partial x_{i}}$,
$\nu_{i2}=\frac{\partial^{2}L_{i}(x_{i},t)}{\partial x_{i}\partial t}$,
and $\nu_{i3}=\frac{\partial^{2}L_{i}(x_{i},t)}{\partial x_{i}^{2}}$. }

{We assert that the protocol }

\begin{equation}
u_{i}=-\nu_{i3}^{-1}\left(\nu_{i1}+\nu_{i2}\right)+r_{i},\,i=1,\ldots,N, \label{eq:1DisEstLaw}
\end{equation}
with $r_i$ as in \eqref{eq:1consensus_prot}{ will drive the agents with dynamics \eqref{eq:1Agents_Dynamics}
to the solution of the distributed convex optimization problem \eqref{eq:Dis Min Prob}.
Here, we omit the consensus analysis as it is would be identical to
the proof of Lemma \ref{prop:consensustheorem} with similar conditions.
We only present a lemma that shows how the protocol \eqref{eq:1DisEstLaw}
yields the solution to the optimization problem \eqref{eq:1BarrierProb2}. }
\begin{lemma}
Suppose that Assumptions 1, 2,and 3 hold, $\sum_{i=1}^{N}\kappa_{i}(0)=0$,
and $c>\underset{t}{sup}\left\{ \left\Vert \kappa{}_{i}(x_{i},t)\right\Vert _{\infty}\right\} ,\,\forall i\in\mathcal{N}$.
Then, the protocol \eqref{eq:1DisEstLaw} will solve the convex optimization
problem \eqref{eq:1BarrierProb2}. \end{lemma}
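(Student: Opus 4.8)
The plan is to exploit the finite-time convergence of the distributed average-tracking estimator \eqref{eq:1EstDynamics1}--\eqref{eq:1EstDynamics2} in order to reduce the locally implementable protocol \eqref{eq:1DisEstLaw} to the centralized control law \eqref{eq:1CentLaw_Collective} after a finite time, and then to invoke the centralized convergence result already derived for \eqref{eq:1CentLaw}. Concretely, I would proceed in three stages: (i) show that each agent's estimate $\nu_i(t)$ reaches the network average in finite time; (ii) substitute this average into \eqref{eq:1DisEstLaw} and verify that the resulting optimization term coincides with \eqref{eq:1CentLaw_Collective}; and (iii) transport the scalar Lyapunov/Barbalat argument established for \eqref{eq:1CentLaw} to conclude that the first condition of \eqref{eq:1OptCondition} holds, followed by a feasibility argument identical to the one in Theorem \ref{thm:Theorem1}.

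For stage (i), I would appeal directly to Theorem~1 of \cite{chen2012distributed}. Since \eqref{eq:1EstDynamics1} yields $\sum_{i=1}^{N}\dot\kappa_i(t)=0$, the initialization $\sum_{i=1}^{N}\kappa_i(0)=0$ gives $\sum_{i=1}^{N}\kappa_i(t)=0$ for all $t$; together with the gain condition $c>\sup_t\{\|\kappa_i(x_i,t)\|_\infty\}$, the sign-based coupling drives the disagreement $|\nu_i(t)-\nu_j(t)|$ to zero over a finite horizon $T_1$, with common value \eqref{eq:1EstDyn4}. Thus for $t\geq T_1$ the three components of $\nu_i$ equal $\frac{1}{N}$ times the corresponding network sums, the last being invertible by Assumption~1.b. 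The key observation for stage (ii) is that the common factor $\frac{1}{N}$ cancels in the ratio appearing in \eqref{eq:1DisEstLaw}:
\[
-\nu_{i3}^{-1}\left(\nu_{i1}+\nu_{i2}\right)=-\left(\sum_{j=1}^{N}\frac{\partial^{2}L_{j}}{\partial x_{j}^{2}}\right)^{-1}\left(\sum_{j=1}^{N}\frac{\partial L_{j}}{\partial x_{j}}+\sum_{j=1}^{N}\frac{\partial^{2}L_{j}}{\partial x_{j}\partial t}\right),
\]
which is precisely the optimization part of \eqref{eq:1CentLaw_Collective}. Hence, for $t\geq T_1$, the protocol \eqref{eq:1DisEstLaw} equals \eqref{eq:1CentLaw_Collective} augmented by the consensus term $r_i$ of \eqref{eq:1consensus_prot}.

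For stage (iii), I would reuse $V(\bar x,t)=\frac{1}{2}\left(\sum_{i}\partial L_i/\partial x_i\right)^{2}$ exactly as in Theorem \ref{thm:Theorem1}. Differentiating along the closed loop for $t\geq T_1$ and inserting the common centralized increment produces $\dot V=-\left(\sum_{i}\partial L_i/\partial x_i\right)^{2}$ plus a cross term arising from $r_i$; once consensus is invoked (by the argument deferred to Lemma \ref{prop:consensustheorem}, under which $r_i\to 0$ as the states agree), the descent $\dot V\leq 0$ is recovered, and the Barbalat reasoning of Theorem \ref{thm:Theorem1} forces $\sum_{i}\partial L_i/\partial x_i\to 0$, i.e.\ the first relation in \eqref{eq:1OptCondition}. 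Feasibility, $g_i(x_i(t))<0$, then follows from the blow-up of the logarithmic barrier by the same contradiction argument used at the end of the proof of Theorem \ref{thm:Theorem1}, so that the agents settle at the minimizer of \eqref{eq:1BarrierProb2}.

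I expect the main obstacle to be the transient interval $[0,T_1)$ together with the coupling between the consensus term and the optimization descent. The Lyapunov argument of stage (iii) only certifies descent after the estimates have converged and the states have (practically) agreed, so I would need the global boundedness guaranteed by Lemma \ref{lem:Boundedness} and strict feasibility on $[0,T_1)$ to ensure all quantities remain well-defined up to $T_1$; I would also have to control the cross term generated by $r_i$, arguing that it vanishes as practical consensus is attained. A secondary technical point is that the references being tracked, namely $\partial L_i/\partial x_i$ and its derivatives, are time-varying through $x_i(t)$, so the gain condition on $c$ must dominate their rate of variation for \cite{chen2012distributed} to deliver genuine finite-time tracking rather than mere asymptotic agreement.
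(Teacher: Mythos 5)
Your stages (i) and (ii), and the Barbalat/feasibility part of stage (iii), match the paper's own proof: the paper likewise relies on finite-time convergence of the estimator via Theorem 1 of \cite{chen2012distributed}, works with the Lyapunov function $V(t)=\frac{1}{2}\bigl(\sum_{i=1}^{N}\nu_{i1}(t)\bigr)^{2}$ for $t>T_{1}$, handles the transient $[0,T_{1}]$ by noting that a locally Lipschitz dynamics with bounded input stays finite, and finishes with Barbalat's lemma plus the barrier blow-up contradiction exactly as in Theorem \ref{thm:Theorem1}.

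The genuine gap is your treatment of the cross term generated by $r_{i}$. You argue it disappears because ``$r_{i}\to 0$ as the states agree,'' invoking Lemma \ref{prop:consensustheorem}. That lemma only yields \emph{practical} consensus, $|x_{i}-x_{j}|\le\delta_{0}$ with $\delta_{0}>0$, so $r_{i}$ becomes small but never vanishes; and even granting $r_{i}\to 0$ asymptotically, at finite times you would only have $\dot V\le-\bigl(\sum_{i}\nu_{i1}\bigr)^{2}+\varepsilon(t)$ with $\varepsilon(t)\neq 0$, which destroys both the monotonicity of $V$ and the $\mathcal{L}^{2}$ bound on $\sum_{i}\nu_{i1}$ that the Barbalat argument requires (you would at best conclude convergence to a neighborhood of the optimizer). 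The correct mechanism, and the one the paper uses, is exact cancellation: substituting \eqref{eq:1DisEstLaw} into $\dot V$ produces the cross term $\bigl(\sum_{i}\nu_{i1}\bigr)\bigl(\sum_{i}\nu_{i3}r_{i}\bigr)$; for $t>T_{1}$ all Hessian estimates coincide, $\nu_{i3}=\nu_{j3}$, so the common factor pulls out of the sum, and $\sum_{i=1}^{N}r_{i}\equiv 0$ holds identically for all $t$ by oddness of $\tanh$ over an undirected graph (each edge contributes two terms of opposite sign). Hence $\dot V=-\bigl(\sum_{i}\nu_{i1}\bigr)^{2}$ exactly for all $t>T_{1}$, with no appeal whatsoever to consensus having been reached. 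Replace your limiting argument with this algebraic identity and the rest of your proof goes through as written.
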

\begin{proof}
Let us define the following Lyapunov candidate function,
\begin{equation}
V(t)=\frac{1}{2}\left(\sum_{i=1}^{N}\nu_{i1}(t)\right)^{2}.
\end{equation}
After calculating time derivative of $V(t)$, the following holds,

\begin{align}
\dot{V}(t) & =\left(\sum_{i=1}^{N}\nu_{i1}(t)\right)\left(\sum_{i=1}^{N}\nu_{i3}(t)u_{i}(t)+\nu_{i2}(t)\right).
\end{align}
Form the control law \eqref{eq:1DisEstLaw}, we attain
\begin{eqnarray}
\dot{V}(t) & = & -\left(\sum_{i=1}^{N}\nu_{i1}(t)\right)^{2},
\end{eqnarray}
in which we used  the equalities $\nu_{i3}(t)=\nu_{j3}(t),\,\forall i,j\in\mathcal{N}$
for $t>T_{1}$, and $\sum_{i=1}^{N}r_{i}(t) =0$ for the graph $\mathcal{{G}}$.
We conclude that
\begin{eqnarray}
\dot{V}(t) & \leq & 0,\,\forall t>T_{1},\label{eq:1EstLyapfin}
\end{eqnarray}

On the other hand, we assert that $x_{i}$, $i\in\mathcal{N}$, stay
bounded after a finite time as the agents' dynamics is locally Lipschitz
and their inputs are bounded. {This} means that for
$t\leq T_{1}$, $x_{i}$ {remains finite, i.e. $x_{i}\in\mathbb{{R}},\forall i\in\mathcal{N}$}.
Hence, we can do stability analysis from $T_{1}$ onwards. {We
now appeal to the same justification as presented in the proof of
Theorem \ref{thm:Theorem1} and invoke Barbalat's lemma \cite{tao1997simple} to show}
that $\sum_{i=1}^{N}\nu_{i1}(t)=0$ as $t\rightarrow\infty$.
{The remainder of the proof is similar to that
of Theorem \ref{thm:Theorem1}.}

\end{proof}

\section{{Optimal Consensus for Double-integrator
Agents }}

This section investigates distributed optimal consensus
problem in a network of agents with double-integrator dynamics. The
final positions of agents shall be the minimizer of the network's {global}
objective function and satisfy some local constraints. However, {in
this case} we only have direct control over the velocity of each agent.
{This} makes the problem more challenging compared
{to that of the previous section. }

Consider a network of $N$ agents with double-integrator dynamics
as
\begin{equation}
\ensuremath{\begin{array}{l}
{{\dot{x}}_{i}}(t)={v_{i}}(t),\\
{{\dot{v}}_{i}}(t)={u_{i}}(t),
\end{array}}\label{eq:2AgentsDyn}
\end{equation}
where $x_{i}(t),v_{i}(t)\in\mathbb{R}$ are the position and velocity
of $i$-th agent, respectively. Moreover, $u_{i}(t)\in\mathbb{R}$
represents the control law. These agents exchange their positions'
information under the graph $\mathcal{{G}}$. The
goal is to design $u_{i}(t)$, $i\in\mathcal{N}$, in order to
find {the solution to the optimization problem} \eqref{eq:Dis Min Prob}.
To this end, we follow the same {strategy} that was
used in the previous section, {namely splitting the
problem }\eqref{eq:Dis Min Prob} into two subproblems, i.e. the convex
optimization problem \eqref{eq:Problem3}, and the following consensus
problem
\begin{equation}
\underset{t\rightarrow\infty}{lim}(x_{i}-x_{j})=0,\,\,\,\,\,\,\,\underset{t\rightarrow\infty}{lim}v_{i}=0,\,\,\,\,\forall i,j\in\{1,\ldots,N\}.\label{eq:2ConsProb}
\end{equation}
The problem in \eqref{eq:2ConsProb}
is {referred to as} stationary consensus problem
in the literature \cite{rezaee2015average}. {As
we have already shown}, the problem \eqref{eq:Problem3} can be redefined
{as the optimization problem} \eqref{eq:1BarrierProb2}
{via the interior-point method.}

In the sequel, we illustrate that if we choose the control input of
agent $i$ as
\begin{equation}
u_{i}(t)=-\frac{dk_{i}(x_{i},t)}{dt}-\frac{\partial^{2}L_{i}(x_{i},t)}{\partial x_{i}^{2}}\frac{\partial L_{i}(x_{i},t)}{\partial x_{i}}+r_{i},\label{eq:2InputLaw}
\end{equation}
 where
\begin{equation}
k_{i}(x_{i},t)=\left(\frac{\partial^{2}L_{i}(x_{i},t)}{\partial x_{i}^{2}}\right)^{-1}\left(\frac{\partial L_{i}(x_{i},t)}{\partial x_{i}}+\frac{\partial^{2}L_{i}(x_{i},t)}{\partial t\partial x_{i}}\right)\label{eq:k_i}
\end{equation}
and
\begin{gather}
r_{i}=-\gamma_{1}\sum_{j\in\mathcal{{N}}_{i}}sig(x_{i}-x_{j})^{q}-\gamma_{2}sig(v_{i})^{p},\label{eq:2ConsProt}
\end{gather}
with $0<q<1$, $p=\frac{2q}{q+1}$, and $\gamma_{1},\gamma_{2}\in\mathbb{R}^{+}$,
the trajectories of the dynamics {stated in} \eqref{eq:2AgentsDyn}
converge to the solution of the convex optimization problem \eqref{eq:1BarrierProb2}.
\textcolor{black}{Moreover, all agents attain the same position perhaps with arbitrarily small error and
asymptotically zero velocity.}
We first introduce the notion of practical stationary consensus  \textcolor{black}{to formalize the latter}.
\begin{definition}
{A network of agents with double-integrator dynamics
as in \eqref{eq:2AgentsDyn} are said to achieve a practical stationary
consensus if $|x_{i}(t)-x_{j}(t)|<\delta_{0}$, $\forall i,j\in\mathcal{N}$
for an arbitrarily small $\delta_{0}$ and $|v_{i}(t)|\le\delta_{1}, \forall i\in\mathcal{N}$,
for a small desired $\delta_{1}$. }\end{definition}
\begin{lemma}
\label{prop:ConsensusProp2}Consider Assumptions 1.a and 3. Suppose
that the agents \eqref{eq:2AgentsDyn} exchange their positions' information
according to the graph $\mathcal{{G}}$ under the protocol \eqref{eq:2InputLaw}.
\textcolor{black}{If }$\left|\varphi_{i}-\varphi_{j}\right|<\varphi_{0}$\textcolor{black}{,
$i,j\in\mathcal{N},$ where $\varphi_{i}=-\frac{dk_{i}(x_{i},t)}{dt}-\frac{\partial^{2}L_{i}(x_{i},t)}{\partial x_{i}^{2}}\frac{\partial L_{i}(x_{i},t)}{\partial x_{i}}$,
and }$\gamma_{2},\gamma_{1}\gg\varphi_{0}$, then, {
practical stationary consensus is achieved in a finite time. }\end{lemma}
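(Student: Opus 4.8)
The plan is to mirror the architecture of the single-integrator result (Lemma \ref{prop:consensustheorem}) but with an energy-type Lyapunov function adapted to the second-order dynamics, and then to exploit the homogeneity relation $p=\frac{2q}{q+1}$ to obtain \emph{finite-time} behaviour. First I would write the closed loop of \eqref{eq:2AgentsDyn} under \eqref{eq:2InputLaw} in aggregate form, collecting the optimization terms into the forcing $\varphi_i$ so that $\dot v_i=\varphi_i-\gamma_1\sum_{j\in\mathcal N_i}sig(x_i-x_j)^q-\gamma_2 sig(v_i)^p$. Since only the mutual differences $|\varphi_i-\varphi_j|$ are assumed small, $\varphi_i$ is treated as a bounded perturbation acting on top of a finite-time stationary-consensus protocol, and the common component is tracked separately through the center of mass.

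Next I would introduce the potential-plus-kinetic Lyapunov candidate
\[
V=\frac{\gamma_1}{q+1}\sum_{(\vartheta_i,\vartheta_j)\in\mathcal E}|x_i-x_j|^{q+1}+\frac12\sum_{i=1}^N v_i^{2},
\]
whose position part is built precisely so that $\frac{\partial}{\partial x_i}$ of it reproduces the consensus term $\gamma_1\sum_{j\in\mathcal N_i}sig(x_i-x_j)^q$. Differentiating along the trajectories, the two $\gamma_1$ contributions cancel \emph{directly} by this gradient structure (no appeal to $\sum_i r_i=0$ is needed, as the damping term does not sum to zero here), leaving the clean expression $\dot V=\sum_{i=1}^N v_i\varphi_i-\gamma_2\sum_{i=1}^N|v_i|^{p+1}$. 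In the nominal case $\varphi_i\equiv0$ this is negative semidefinite and vanishes only on $\{v=0\}$; a LaSalle/invariance argument then forces $\sum_{j\in\mathcal N_i}sig(x_i-x_j)^q=0$ for all $i$, which for the connected graph $\mathcal G$ (Assumption 3) is equivalent to position consensus, giving asymptotic stationary consensus.

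To upgrade this to \emph{finite time}, I would observe that the nominal closed loop in consensus-error coordinates is homogeneous of \emph{negative} degree: assigning dilation weights $(r_x,r_v)$, invariance of the vector field under the dilation forces exactly $(2-p)q=p$, i.e.\ $p=\frac{2q}{q+1}$, and yields degree $\kappa=r_v(p-1)<0$ since $p<1$. By the homogeneity theorem for finite-time stability, negative degree together with the asymptotic stability just established promotes convergence to the consensus manifold to finite time. For the perturbed system I would then compare damping with disturbance: because $p+1\in(1,2)$, the term $\gamma_2\sum_i|v_i|^{p+1}$ dominates $\sum_i v_i\varphi_i$ once the velocities leave a ball whose radius is controlled by $\gamma_2$, while the relative-position feedback with $\gamma_1\gg\varphi_0$ confines the inter-agent errors $|x_i-x_j|$ to an $\varphi_0$-governed neighbourhood; invoking the practical-stability Lemma \ref{lem:stability} then delivers $|x_i-x_j|\le\delta_0$ and $|v_i|\le\delta_1$ after a finite time, with $\delta_0,\delta_1$ shrinking as $\gamma_1,\gamma_2$ grow relative to $\varphi_0$.

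The hard part will be the interaction between the persistent perturbation $\varphi_i$ and the finite-time mechanism. Exact finite-time convergence to the manifold is lost, so one must instead establish \emph{practical} finite-time convergence; moreover the cross term $\sum_i v_i\varphi_i$ in $\dot V$ is not sign-definite, so the energy estimate alone controls only the velocities and a separate homogeneity-based (input-to-state-type) argument is needed to pin down the position error. Reconciling the LaSalle step — which is inherently asymptotic and valid only for $\varphi\equiv0$ — with a quantitative finite-time neighbourhood estimate for $\varphi\neq0$, together with the clean separation of the common and relative motions so that the absolute velocity damping $-\gamma_2 sig(v_i)^p$ indeed forces all $v_i$ small, is the principal technical obstacle.
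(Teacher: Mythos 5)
Your overall architecture mirrors the paper's proof: write the closed loop \eqref{eq:2AgentsDyn}, \eqref{eq:2InputLaw} as a perturbation of a nominal finite-time stationary-consensus dynamics, take a potential-plus-kinetic Lyapunov function whose potential part integrates the $sig(\cdot)^{q}$ coupling so that the $\gamma_{1}$ terms cancel, let the $\gamma_{2}$ damping dominate the perturbation, and conclude practical consensus via Lemma \ref{lem:stability}. Your homogeneity argument for finite-time convergence of the nominal system is a reasonable substitute for the paper's citation of the finite-time consensus literature. The decisive difference — and a genuine gap — lies in the kinetic term of your Lyapunov function.

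You take $\frac{1}{2}\sum_{i}v_{i}^{2}$, so the perturbation enters $\dot{V}$ as $\sum_{i}v_{i}\varphi_{i}$. But the hypothesis bounds only the pairwise differences, $\left|\varphi_{i}-\varphi_{j}\right|<\varphi_{0}$; it says nothing about the size of $\varphi_{i}$ itself. Writing $\varphi_{i}=\bar{\varphi}+\tilde{\varphi}_{i}$ with $\bar{\varphi}=\frac{1}{N}\sum_{j}\varphi_{j}$, the contribution $\bar{\varphi}\sum_{i}v_{i}$ is completely uncontrolled ($\bar{\varphi}$ involves the gradients and Hessians of the $L_{i}$ along the trajectory and can be arbitrarily large), so $-\gamma_{2}\sum_{i}\left|v_{i}\right|^{p+1}$ cannot dominate it no matter how large $\gamma_{2}$ is; your claim that the damping dominates the cross term outside a ball whose radius is set by $\gamma_{2}$ is false as stated. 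You flag exactly this issue ("separation of common and relative motions") as the principal technical obstacle, but leave it unresolved, and it is not a technicality: it is the step the lemma turns on. The paper closes it by building the kinetic term from the disagreement velocity $\bar{e}_{v}=\Pi\bar{v}$, as in \eqref{eq:2ConsTh1}--\eqref{eq:2ConsTh3}; then the perturbation appears only as $\bar{v}^{\top}\Pi\varPsi$, the projector $\Pi$ annihilates the common mode $\bar{\varphi}\mathbf{1}$, and what remains is bounded by a multiple of $\varphi_{0}\left\Vert \bar{v}\right\Vert $, which the $\gamma_{2}$ term can dominate. The $\gamma_{1}$ cancellation survives this change since $e_{x_{i}}-e_{x_{j}}=x_{i}-x_{j}$. (Even with the projection, pinning down the \emph{absolute} velocities $\left|v_{i}\right|\leq\delta_{1}$ — rather than the relative ones — remains delicate: the appendix bound $\bar{v}^{\top}\Pi\,sig(\bar{v})^{p}\geq\frac{N}{N-1}\left(\left\Vert \bar{v}\right\Vert _{p+1}\right)^{p+1}$ used in \eqref{eq:2ConsTh5} fails on the common-velocity direction $\bar{v}=c\mathbf{1}$, where the left side vanishes; so your instinct that the absolute-velocity claim requires a separate argument is well founded.)
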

\begin{proof}
Define the aggregate states by $\bar{x}=[x_{1}\cdots x_{N}]^{\top}\in\mathbb{R}^{N}$
and $\bar{v}=[v_{1}\cdots v_{N}]^{\top}\in\mathbb{R}^{N}$. We first
introduce the    following dynamics,
\begin{align}
\dot{\bar{x}} & =\bar{v},\nonumber \\
\dot{\bar{v}} & =-\gamma_{1}Dsig(D^{\top}\bar{x})^{q}-\gamma_{2}sig(\bar{v})^{p}.\label{eq:2ConsTh0}
\end{align}
{It was shown in \cite{wang2008finite} that the
above dynamics reaches the stationary consensus in some finite time, say $\tau_0$.
We now proceed with the rest of the proof by defining the error vector
associated with the position as }$\bar{e}_{x}=\bar{x}-\frac{1}{N}\mathbf{1}_{N}\mathbf{1}_{N}^{\top}\bar{x}$.
Then, by taking derivative from $\bar{e}_{x}$ with respect to time,
one obtains that $\dot{\bar{e}}_{x}=\bar{v}-\frac{1}{N}\mathbf{1}_{N}\mathbf{1}_{N}^{\top}\bar{v}$.
Let $\bar{e}_{v}=\dot{\bar{e}}_{x}$. Then, from \eqref{eq:2ConsTh0},
the aggregate consensus error dynamics is derived,
\begin{align}
\dot{\bar{e}}_{x} & =\bar{e}_{v},\label{eq:2ConsTh1}\\
\dot{\bar{e}}_{v} & =-\gamma_{1}Dsig(D^{\top}\bar{e}_{x})^{q}-\gamma_{2}\Pi sig(\bar{v})^{p}.\nonumber
\end{align}

{We now attain the aggregate model associated with
\eqref{eq:2AgentsDyn} and \eqref{eq:2InputLaw}. This model can be
seen as a perturbed form of the hypothetical nominal system in \eqref{eq:2ConsTh1},}
\begin{align}
\dot{\bar{e}}_{x} & =\bar{e}_{v},\label{eq:2ConsTh2}\\
\dot{\bar{e}}_{v} & =-\gamma_{1}Dsig(D^{\top}\bar{e}_{x})^{q}-\gamma_{2}\Pi sig(\bar{v})^{p}+\Pi\varPsi,\nonumber
\end{align}
where $\varPsi=[\varphi_{1}\ldots\varphi_{N}]^{\top}$ and $\Pi\varPsi\in\mathbb{R}^{N}$
refers to the perturbation term to the nominal system \eqref{eq:2ConsTh1}.
We choose the following Lyapunov candidate function as
\begin{equation}
V(\bar{e}_{x},\bar{e}_{v})=\gamma_{1}\sum_{i=1}^{N}\sum_{j\in\mathcal{N}_{i}}\int_{0}^{e{}_{x_{i}}-e_{x_{j}}}sig(s)^{q}ds+\frac{1}{2}\bar{e}_{v}^{\top}\bar{e}_{v}.\label{eq:2ConsTh3}
\end{equation}
One can take a time derivative of $V(\bar{e}_{x},\bar{e}_{v})$ and
obtain

\begin{equation}
\dot{V}(\bar{e}_{x},\bar{e}_{v})=\gamma_{1}\sum_{i=1}^{N}\sum_{j\in\mathcal{N}_{i}}sig(e_{x_{i}}-e_{x_{j}})^{q}e_{v_{i}}+\bar{e}_{v}^{\top}\dot{\bar{e}}_{v}.\label{eq:2ConsTh4}
\end{equation}
From \eqref{eq:2ConsTh2}, it holds that
\begin{eqnarray}
\dot{V}(\bar{e}_{x},\bar{e}_{v}) & = & \gamma_{1}\sum_{i=1}^{N}\sum_{j\in\mathcal{N}_{i}}sig(e_{x_{i}}-e_{x_{j}})^{q}e_{v_{i}}+\bar{e}_{v}^{\top}\left(-\gamma_{1}Dsig(D^{\top}\bar{e}_{x})^{q}-\gamma_{2}\Pi sig(\bar{v})^{p}+\Pi\varPsi\right)\nonumber \\
 & = &\gamma_{1} \sum_{i=1}^{N}e_{v_{i}}\sum_{j\in\mathcal{N}_{i}}sig(e_{x_{i}}-e_{x_{j}})^{q}-\gamma_{1}\bar{e}_{v}^{\top}Dsig(D^{\top}\bar{e}_{x})^{q}-\gamma_{2}\bar{v}^{\top}\Pi^{\top}\Pi sig(\bar{v})^{p}\nonumber \\
 &  & + \, \bar{v}^{\top}\Pi^{\top}\Pi\varPsi\nonumber \\
 & = & -\gamma_{2}\bar{v}^{\top}\Pi sig(\bar{v})^{p}+\bar{v}^{\top}\Pi\varPsi
\end{eqnarray}
Since the inequality $\bar{v}^{\top}\Pi sig(\bar{v})^{p}\geq{\textstyle \frac{N}{N-1}}\left(\left\Vert \bar{v}\right\Vert _{p+1}\right)^{p+1}$
always holds {[}Appendix{]}, the following inequality is concluded

\begin{eqnarray}
\dot{V} & \leq & -{\textstyle \frac{\gamma_{2}N}{N-1}}\left(\left\Vert \bar{v}\right\Vert _{p+1}\right)^{p+1}+\bar{v}^{\top}\Pi\varPsi\label{eq:2ConsTh5}\\
 & \leq & -{\textstyle \frac{\gamma_{2}N}{N-1}}\left(\left\Vert \bar{v}\right\Vert _{p+1}\right)^{p+1}+\varphi_{0}\left\Vert \bar{v}\right\Vert \label{eq:2ConsTh5.5} \\
 & \leq & -{\textstyle \frac{\gamma_{2}N}{N-1}}\left(\left\Vert \bar{v}\right\Vert _{p+1}\right)^{p+1}+\varphi_{0}\left\Vert \bar{v}\right\Vert _{p+1}\label{eq:2ConsTh6}\\
 & \leq & -\theta_{2}\left\Vert \bar{v}\right\Vert _{p+1}+\left\Vert \bar{v}\right\Vert _{p+1}\left(-{\textstyle \frac{\gamma_{2}N}{N-1}}\left(\left\Vert \bar{v}\right\Vert _{p+1}\right)^{p}+\varphi_{0}+\theta_{2}\right)\text{, }\theta_{2}>0,\\
 & \leq & -\theta_{2}\left\Vert \bar{v}\right\Vert _{p+1},\,\forall\left\Vert \bar{v}\right\Vert _{p+1}\geq\left({\textstyle \frac{(N-1)\left(\varphi_{0}+\theta_{2}\right)}{\gamma_{2}N}}\right)^{1/p}\label{eq:2ConsTh7}
\end{eqnarray}

The second term on the right side of the inequality \eqref{eq:2ConsTh5.5}
aries from the assumption $\left|\varphi_{i}-\varphi_{j}\right|<\varphi_{0}$.
The relation \eqref{eq:2ConsTh6} is also obtained from the fact that
$\left\Vert \bar{v}\right\Vert \leq\left\Vert \bar{v}\right\Vert _{p+1}$.
\textcolor{black}{According to Lemma \ref{lem:stability} and from the
inequality \eqref{eq:2ConsTh7}, the stability of the perturbed
system \eqref{eq:2ConsTh2} is guaranteed when $\gamma_{2}$ is chosen
large enough.  Then, according to Lemma 5.3 in \cite{khalil1996nonlinear},
practical stationary consensus in finite time is achieved. Thereby,
the proof is complete. }
\end{proof}
%\begin{remark}
%With regards to the fact that in automation engineering the control signal always remains bounded due to technical issues, the assumption $\left|\varphi_{i}-\varphi_{j}\right|<\varphi_{0}, i,j\in\{1,\cdots,N\}$, in Lemma \ref{prop:ConsensusProp2} is not restrictive in practice.
%\end{remark}
{To illustrate that indeed the dynamics \eqref{eq:2AgentsDyn},
when driven under the control law \eqref{eq:2InputLaw}, converges
to the solution of the optimization problem \eqref{eq:1BarrierProb2},
it suffices to verify that the equilibrium point of \eqref{eq:2AgentsDyn}
under the control law \eqref{eq:2InputLaw} coincides with the point
that satisfies the optimality conditions in \eqref{eq:1OptCondition}.
We first solve the distributed convex optimization problem \eqref{eq:1BarrierProb2}
under the condition $g_{i}\left(\cdot\right)=g_{j}\left(\cdot\right)$, $i,j\in\mathcal{N}$.
We then propose a fully distributed algorithm to relax the imposed
condition.}

\subsection{Case I: Agents with Common Constraint}

In this subsection, we prove that under the control law\textcolor{black}{{}
\eqref{eq:2InputLaw}} all agents with dynamics as in \textcolor{black}{\eqref{eq:2InputLaw}}
reach a point that is the solution to the optimization problem \eqref{eq:1BarrierProb2}
when $g_{i}(\cdot)=g(\cdot),\,\forall i\in\mathcal{N}$.
\begin{theorem}
\label{the:Theorem2} Consider Assumptions 1, 2, and 3. If $\frac{\partial^{2}f_{i}(\cdot)}{\partial x_{i}^{2}}=\frac{\partial^{2}f_{j}(\cdot)}{\partial x_{j}^{2}}$
and $g_{i}\left(\cdot\right)=g_{j}\left(\cdot\right)$, $\forall i,j\in\mathcal{N}$,
then, the group of agents with dynamics as in \eqref{eq:2AgentsDyn}
under the control law \eqref{eq:2InputLaw} will converge to the optimum
point of the optimization problem \eqref{eq:1BarrierProb2}.\end{theorem}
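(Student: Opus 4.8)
The plan is to lean on the finite-time consensus already established in Lemma~\ref{prop:ConsensusProp2} and then reproduce, in the double-integrator setting, the optimality argument of Theorem~\ref{thm:Theorem1}. Since Lemma~\ref{prop:ConsensusProp2} guarantees practical stationary consensus in a finite time $\tau_0$ (positions merge and velocities vanish), it suffices, as the paragraph preceding the statement notes, to verify that the resulting configuration meets the centrality conditions \eqref{eq:1OptCondition}. Concretely, for $t>\tau_0$ I would show that the aggregate gradient $\Phi(t):=\sum_{i=1}^N \frac{\partial L_i(x_i,t)}{\partial x_i}$ tends to zero while strict feasibility $g(x_i(t))<0$ is preserved; together with $v_i\to 0$ and $x_i\to x_j$ these are exactly the optimality conditions characterising the minimizer of \eqref{eq:1BarrierProb2}.

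First I would introduce the velocity-tracking variable $w_i:=v_i+k_i(x_i,t)$, with $k_i$ as in \eqref{eq:k_i}, which measures the mismatch between the true velocity and the single-integrator descent direction $-k_i$. Differentiating along \eqref{eq:2AgentsDyn}--\eqref{eq:2InputLaw}, the control term $-\frac{dk_i}{dt}$ cancels $\dot k_i$, leaving $\dot w_i=-\frac{\partial^2 L_i}{\partial x_i^2}\frac{\partial L_i}{\partial x_i}+r_i$. Writing $\phi_i:=\frac{\partial L_i}{\partial x_i}$ and using $\dot x_i=v_i$ together with $\frac{\partial^2 L_i}{\partial x_i^2}k_i=\phi_i+\frac{\partial^2 L_i}{\partial t\partial x_i}$ from \eqref{eq:k_i}, a short computation gives $\dot\phi_i=\frac{\partial^2 L_i}{\partial x_i^2}\,w_i-\phi_i$. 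The hypotheses $\frac{\partial^2 f_i}{\partial x_i^2}=\frac{\partial^2 f_j}{\partial x_j^2}$ and $g_i(\cdot)=g_j(\cdot)$ make all Hessians $\frac{\partial^2 L_i}{\partial x_i^2}$ coincide at consensus, say equal to $H$, while the odd symmetry of $sig(\cdot)^q$ over the undirected edge set yields $\sum_{i}\sum_{j\in\mathcal{N}_i}sig(x_i-x_j)^q=0$. Summing the two identities over $i$, and using that the remaining velocity force $\gamma_2\sum_i sig(v_i)^p$ in $\sum_i r_i$ has vanished (or is negligible in the practical-consensus case) for $t>\tau_0$, I obtain the reduced pair $\dot\Phi=HW-\Phi$ and $\dot W=-H\Phi$, where $W:=\sum_i w_i$.

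I would then close the argument with the Lyapunov candidate $\mathcal{V}=\tfrac12\Phi^2+\tfrac12 W^2$. The cross terms cancel irrespective of the (possibly time-varying) value of $H$, which is precisely why the equal-Hessian hypothesis is needed, giving $\dot{\mathcal{V}}=-\Phi^2\le 0$. Integrating as in the proof of Theorem~\ref{thm:Theorem1} (cf. \eqref{eq:2Thorem4}) shows $\Phi\in\mathcal{L}^{2}\cap\mathcal{L}^{\infty}$, and Barbalat's lemma yields $\Phi\to 0$, i.e. the first condition in \eqref{eq:1OptCondition}. Strict feasibility (the second condition) follows by the same contradiction as in Theorem~\ref{thm:Theorem1}: if some $x_i$ reached the boundary $g(x_i)=0$ in finite time, the logarithmic-barrier gradient and hence $\Phi$ would blow up, contradicting $\Phi\in\mathcal{L}^{\infty}$, so $g(x_i(t))<0$ is maintained for all $t>0$. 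Combining $\Phi\to 0$, feasibility, and the stationary consensus of Lemma~\ref{prop:ConsensusProp2} identifies the limit with the optimizer of \eqref{eq:1BarrierProb2}.

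The hard part will be the interplay between the finite-time consensus layer and the asymptotic optimization layer. The clean cancellation $\dot{\mathcal{V}}=-\Phi^2$ relies both on the Hessians being \emph{genuinely} equal, which holds exactly only once positions have coalesced, and on the perturbing term $\gamma_2\sum_i sig(v_i)^p$ in $\sum_i r_i$ having decayed, so the optimization analysis must be launched from $t>\tau_0$ rather than from $t=0$. I would also need to keep the time-varying quantities $\frac{\partial^2 L_i}{\partial t\partial x_i}$ and $H(t)$ bounded with $H(t)$ invertible throughout (guaranteed by Assumption~1.b and strict convexity), and to reconcile the non-Lipschitz nature of $sig(\cdot)^q$, which underlies the finite-time behaviour handled in Lemma~\ref{prop:ConsensusProp2}, with the smooth Lyapunov and Barbalat machinery used for the optimization step.
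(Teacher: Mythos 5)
Your proposal is correct and follows essentially the same route as the paper: your Lyapunov candidate $\mathcal{V}=\tfrac12\Phi^2+\tfrac12 W^2$ with $W=\sum_i\bigl(v_i+k_i(x_i,t)\bigr)$ is exactly the paper's function \eqref{eq:2The1}, and the paper likewise invokes Lemma \ref{prop:ConsensusProp2} to kill $\sum_i r_i$ after a finite time, obtains $\dot V=-\Phi^2$ from the equal-Hessian cancellation, and finishes with the same Barbalat plus barrier-blow-up contradiction for feasibility. Your only departure is presentational — deriving the reduced pair $\dot\Phi=HW-\Phi$, $\dot W=-H\Phi$ before differentiating $\mathcal{V}$ — and your flagged caveats (Hessians coinciding only at consensus, the residual $\gamma_2\sum_i sig(v_i)^p$ term under merely practical consensus) are real subtleties that the paper itself glosses over.
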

\begin{proof}
Consider the Lyapunov function
\begin{equation}
V(\bar{x},\bar{v},t)=\frac{1}{2}\left(\sum_{i=1}^{N}\frac{\partial L_{i}(x_{i},t)}{\partial x_{i}}\right)^{2}+\frac{1}{2}\left(\sum_{i=1}^{N}v_{i}+k_{i}(x_{i},t)\right)^{2}.\label{eq:2The1}
\end{equation}
By calculating the derivative of $V(\bar{x},\bar{v},t)$ with respect
to time along with the trajectories described by the dynamics \eqref{eq:2AgentsDyn},
{it follows that}
\begin{eqnarray}
\dot{V}(\bar{x},\bar{v},t) & = & \sum_{i=1}^{N}\frac{\partial L_{i}(x_{i},t)}{\partial x_{i}}\sum_{i=1}^{N}\left(\frac{\partial^{2}L_{i}(x_{i},t)}{\partial x_{i}^{2}}\dot{x}_{i}+\frac{\partial^{2}L_{i}(x_{i},t)}{\partial x_{i}\partial t}\right)+\nonumber \\
 &  & \left(\sum_{i=1}^{N}v_{i}+k_{i}(x_{i},t)\right)\left(\sum_{i=1}^{N}u_{i}+\frac{dk_{i}(x_{i},t)}{dt}\right)
\end{eqnarray}
From {the conditions} $\frac{\partial^{2}f_{i}(\cdot)}{\partial x_{i}^{2}}=\frac{\partial^{2}f_{j}(\cdot)}{\partial x_{j}^{2}}$
and $g_{i}\left(\cdot\right)=g_{j}\left(\cdot\right)$, $\forall i,j\in\mathcal{N}$,
we can conclude that $\frac{\partial^{2}L_{i}(x_{i},t)}{\partial x_{i}^{2}}=\frac{\partial^{2}L_{j}(x_{j},t)}{\partial x_{j}^{2}}$.
After substituting $u_{i}$ and $k_{i}(x_{i},t)$ in the above equality
with equations \eqref{eq:2InputLaw} and \eqref{eq:k_i}, respectively,
one attains
\begin{eqnarray}
\dot{V}(\bar{x},\bar{v},t) & = & \sum_{i=1}^{N}\frac{\partial L_{i}(x_{i},t)}{\partial x_{i}}\sum_{i=1}^{N}\left(\frac{\partial^{2}L_{i}(x_{i},t)}{\partial x_{i}^{2}}\dot{x}_{i}+\frac{\partial^{2}L_{i}(x_{i},t)}{\partial x_{i}\partial t}\right)+\nonumber \\
 &  & \left(\sum_{i=1}^{N}v_{i}+\left(\frac{\partial^{2}L_{i}(x_{i},t)}{\partial x_{i}^{2}}\right)^{-1}\frac{\partial L_{i}(x_{i},t)}{\partial x_{i}}+\left(\frac{\partial^{2}L_{i}(x_{i},t)}{\partial x_{i}^{2}}\right)^{-1}\frac{\partial^{2}L_{i}(x_{i},t)}{\partial t\partial x_{i}}\right)\nonumber \\
 &  & \left(\sum_{i=1}^{N}-\frac{\partial^{2}L_{i}(x_{i},t)}{\partial x_{i}^{2}}\frac{\partial L_{i}(x_{i},t)}{\partial x_{i}}+r_{i}\right).
\end{eqnarray}
{From Lemma \ref{prop:ConsensusProp2}, one can say that $\sum_{i=1}^{N}r_{i}(t)=0$, $\forall t>\tau_{k}$.
Then, after applying some algebraic simplifications into the above relation, one can verify that}

\begin{eqnarray}
\dot{V}(\bar{x},\bar{v},t) & =- & \left(\sum_{i=1}^{N}\frac{\partial L_{i}(x_{i},t)}{\partial x_{i}}\right)^{2}\label{eq:2The2}\\
 & \leq & 0,\,\,\forall t>\tau_{k}. \label{eq:2The3}
\end{eqnarray}
In the following, we exploit the Barbalat's Lemma \cite{tao1997simple}
to complete the proof.
From the inequality \eqref{eq:2The3}, it can be concluded that $\sum_{i=1}^{N}\frac{\partial L_{i}(x_{i},t)}{\partial x_{i}}\in\mathcal{{L}}^{\infty}$.
By taking integral from $\dot{V}(\bar{x},\bar{v},t)$, one can deduce
that $\sum_{i=1}^{N}\frac{\partial L_{i}(x_{i},t)}{\partial x_{i}}\in\text{\ensuremath{\mathcal{{L}}}}^{2}$.
{We now invoke} Barbalat's lemma \cite{tao1997simple} to conclude that $\underset{t\rightarrow\infty}{\text{lim}}\sum_{i=1}^{N}\frac{\partial L_{i}(x_{i},t)}{\partial x_{i}}=0$. Hence, it follows that

\begin{equation}
\underset{{\scriptstyle t}{\scriptstyle \rightarrow\infty}}{\text{lim}}\sum_{i=1}^{N}\frac{\partial f_{i}(x_{i})}{\partial x_{i}}-\sum_{i=1}^{N}\frac{\alpha}{t+1}\frac{\frac{\partial g_{i}(x_{i})}{\partial x_{i}}}{g(x_{i})}=0.\label{eq:2The4}
\end{equation}
Thereby, the first optimality condition in \eqref{eq:1OptCondition}
is satisfied. In the sequel, we will show that the feasibility condition,
{i.e.} $g({x}_{i}^{*})\leq0$, also holds.
To this end, we employ proof by contradiction. Suppose that we begin
from the initial conditions that satisfy the strict inequality $g\left(x_{i}(0)\right)<0$,
and we have $g\left(x_{i}(t_{k}^{-})\right)<0$ and $g\left(x_{i}(t_{k}^{+})\right)>0$
for some $i$ and finite time $t_{k}>0$. Due to continuity of the function
$g\left(\cdot\right)$, $g(x_{i}(t_{k})$ would be zero. This
implies that $\sum_{i=1}^{N}\frac{\partial L_{i}(x_{i},t)}{\partial x_{i}}$
becomes unbounded at $t_{k}$ which contradicts the fact that $\sum_{i=1}^{N}\frac{\partial L_{i}(x_{i},t)}{\partial x_{i}}\in\mathcal{{L}}^{\infty}$
as shown above. Therefore, the inequality $g(x_{i}(t))<0$ with
$g\left(x_{i}(0)\right)<0$ holds for $t>0$, and this ends the
proof.
\end{proof}

In the next subsection, we will propose an algorithm similar to the
one presented in Subsection 3.2. The proposed algorithm {enables
us to relax} the requirement $g_{i}(\cdot)=g_{j}(\cdot),\,\forall i,j\in\{1,\ldots,N\}$.

\subsection*{4.2 \label{sub:4.2-Case-II}Case II: Agents with Distinct Constraints}

In this subsection, we utilize {the same} distributed average tracking tool
as the one in Subsection 3.2. We then illustrate how all agents with
dynamics as in \eqref{eq:2AgentsDyn} converge to the solution of the optimization
problem \eqref{eq:1BarrierProb2} and reach consensus on their first
states, i.e. their positions, when agents admit distinct constraints.

Consider the double-integrator dynamics
\begin{equation}
\ensuremath{\begin{array}{l}
{\dot{x}}(t)={v}(t),\\
{\dot{v}}(t)={u}(t).
\end{array}}\label{eq:2Cent1}
\end{equation}
with the {strictly convex and twice differentiable} objective function $Q(x,t)$.
\begin{lemma}
\label{CentCont}The following control input drives \textcolor{black}{the
dynamics stated by} \eqref{eq:2Cent1} to the minimizer of the strictly convex
objective function \textup{$Q(x,t)$, }
\end{lemma}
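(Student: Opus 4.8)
The plan is to transcribe the Lyapunov argument of Theorem \ref{the:Theorem2} to the single-agent centralized setting, simply discarding the consensus term $r_{i}$. Since the stated control input should be the centralized counterpart of \eqref{eq:2InputLaw}, I expect it to read
\begin{equation*}
u(t)=-\frac{dk(x,t)}{dt}-\frac{\partial^{2}Q(x,t)}{\partial x^{2}}\frac{\partial Q(x,t)}{\partial x},
\end{equation*}
where $k(x,t)=\left(\frac{\partial^{2}Q(x,t)}{\partial x^{2}}\right)^{-1}\left(\frac{\partial Q(x,t)}{\partial x}+\frac{\partial^{2}Q(x,t)}{\partial t\partial x}\right)$ is the single-agent analogue of \eqref{eq:k_i}. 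Because $Q$ is strictly convex with an invertible Hessian, $k(x,t)$ is well defined for every $x$ and $t$.

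First I would introduce the composite Lyapunov candidate, mirroring \eqref{eq:2The1},
\begin{equation*}
V(x,v,t)=\frac{1}{2}\left(\frac{\partial Q(x,t)}{\partial x}\right)^{2}+\frac{1}{2}\left(v+k(x,t)\right)^{2}.
\end{equation*}
The first summand measures the gap to the stationarity condition $\partial Q/\partial x=0$, while the second couples the velocity $v$ to the correction term $k$; regulating this second quantity is precisely what adapts the first-order argument behind \eqref{eq:1CentLaw} to the second-order dynamics \eqref{eq:2Cent1}.

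Next I would differentiate $V$ along the trajectories of \eqref{eq:2Cent1}. Writing $G=\partial Q/\partial x$, $H=\partial^{2}Q/\partial x^{2}$ and $M=\partial^{2}Q/\partial x\partial t$ for brevity, the chain rule gives $\dot{G}=Hv+M$, while the control law yields $\dot{v}+\dot{k}=u+\dot{k}=-HG$. Hence
\begin{equation*}
\dot{V}=G\left(Hv+M\right)+\left(v+k\right)\left(-HG\right)=GM-HGk.
\end{equation*}
Substituting the identity $Hk=G+M$ read off from the definition of $k$ collapses this to $\dot{V}=-G^{2}=-\left(\partial Q/\partial x\right)^{2}\le0$. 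This cancellation --- the coupling term $GHv$ against $-HGv$, and the two $M$-dependent contributions against one another --- is the crux of the argument and the step I expect to demand the most care; it is exactly the algebraic feature that dictates the chosen forms of $k$ and of $u$.

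Finally, with $\dot{V}\le0$ in hand, I would close the proof exactly as in Theorem \ref{the:Theorem2}: boundedness of $V$ gives $\partial Q/\partial x\in\mathcal{L}^{\infty}$, integrating $\dot{V}$ over $[0,\infty)$ gives $\partial Q/\partial x\in\mathcal{L}^{2}$, and Barbalat's lemma \cite{tao1997simple} then forces $\partial Q/\partial x\to0$ as $t\to\infty$. By strict convexity of $Q$, the stationarity condition $\partial Q/\partial x=0$ is thus satisfied in the limit, i.e. $x$ converges to the minimizer of $Q$, which is the claim. Unlike Theorem \ref{the:Theorem2}, no feasibility-by-contradiction step is required here, since the standing assumption that the Hessian of $Q$ is invertible everywhere keeps $k$ well defined along the entire trajectory.
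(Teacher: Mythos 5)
Your proposal is correct and follows essentially the same route as the paper: you correctly reconstructed the control law \eqref{eq:2Cent2} as $u=-\frac{dk}{dt}-\frac{\partial^{2}Q}{\partial x^{2}}\frac{\partial Q}{\partial x}$ with $k=\left(\frac{\partial^{2}Q}{\partial x^{2}}\right)^{-1}\left(\frac{\partial Q}{\partial x}+\frac{\partial^{2}Q}{\partial t\partial x}\right)$, used the identical composite Lyapunov function \eqref{eq:5Lemma1:1}, obtained the same cancellation yielding $\dot{V}=-\left(\frac{\partial Q}{\partial x}\right)^{2}\le0$, and closed with the same Barbalat argument. If anything, your final step is stated more carefully than the paper's, which loosely writes $V\in\mathcal{L}^{2}$ and $V\to0$ where it means the gradient $\frac{\partial Q}{\partial x}$.
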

\begin{equation}
u(t)=-\frac{d}{dt}\left(\left(\frac{\partial^{2}Q}{\partial x^{2}}\right)^{-1}\frac{\partial Q}{\partial x}+\left(\frac{\partial^{2}Q}{\partial x^{2}}\right)^{-1}\frac{\partial^{2}Q}{\partial t\partial x}\right)-\frac{\partial^{2}Q}{\partial x^{2}}\frac{\partial Q}{\partial x}.\label{eq:2Cent2}
\end{equation}

\begin{proof}
{We start by defining the following Lyapunov function
}as
\begin{equation}
V(x,t)=\frac{1}{2}\left(\frac{\partial Q}{\partial x}\right)^{2}+\frac{1}{2}\left(v+\left(\frac{\partial^{2}Q}{\partial x^{2}}\right)^{-1}\frac{\partial Q}{\partial x}+\left(\frac{\partial^{2}Q}{\partial x^{2}}\right)^{-1}\frac{\partial^{2}Q}{\partial t\partial x}\right)^{2}.\label{eq:5Lemma1:1}
\end{equation}
One can calculate the time derivative of \eqref{eq:5Lemma1:1} {along
the trajectories of the dynamics} $\eqref{eq:2Cent1}$
and obtain
\begin{eqnarray*}
\dot{V}(x,t) & = & \frac{\partial Q}{\partial x}\left(\frac{\partial^{2}Q}{\partial x^{2}}\dot{x}+\frac{\partial^{2}Q}{\partial x\partial t}\right)+\left(v+\left(\frac{\partial^{2}Q}{\partial x^{2}}\right)^{-1}\frac{\partial Q}{\partial x}+\left(\frac{\partial^{2}Q}{\partial x^{2}}\right)^{-1}\frac{\partial^{2}Q}{\partial t\partial x}\right)\\
 &  & \left(u+\frac{d}{dt}\left(\left(\frac{\partial^{2}Q}{\partial x^{2}}\right)^{-1}\frac{\partial Q}{\partial x}+\left(\frac{\partial^{2}Q}{\partial x^{2}}\right)^{-1}\frac{\partial^{2}Q}{\partial t\partial x}\right)\right).
\end{eqnarray*}
By substituting $u$ in $\dot{V}(x,t)$ with \eqref{eq:2Cent2}, it is easy to verify that
\begin{eqnarray}
\dot{V}(x,t) & = & -\left(\frac{\partial Q}{\partial x}\right)^{2}\label{eq:5Lemma1:2}\\
 & \leq & 0\label{eq:5Lemma1:3}
\end{eqnarray}
Now, we apply Barbalat's Lemma \cite{tao1997simple} to clear the
proof. Due to passivity of $V(x,t)$, one can derive that $V(x,t)\in\mathcal{{L}}^{2}$.
Moreover, from \eqref{eq:5Lemma1:3}, $V(x,t)\in\mathcal{{L}^{\infty}}$
holds. {Hence}, $V(x,t)\rightarrow0$ as $t\rightarrow\infty$.
It implies that ${\displaystyle \underset{t\rightarrow\infty}{lim}\frac{\partial Q}{\partial x}}=0$.
{This concludes the proof.}
\end{proof}

{We now} exploit the result of Lemma \ref{CentCont}
to minimize a collective convex objective function {in
a distributed fashion}. {We propose that the control
law}
\begin{eqnarray}
u_{i}(t) & = & -\frac{d}{dt}\left(\left(\frac{\partial^{2}\sum_{i=1}^{N}L_{i}}{\partial x_{i}^{2}}\right)^{-1}\frac{\partial\sum_{i=1}^{N}L_{i}}{\partial x_{i}}+\left(\frac{\partial^{2}\sum_{i=1}^{N}L_{i}}{\partial x_{i}^{2}}\right)^{-1}\frac{\partial^{2}\sum_{i=1}^{N}L_{i}}{\partial t\partial x_{i}}\right)\nonumber \\
 &  & -\frac{\partial^{2}\sum_{i=1}^{N}L_{i}}{\partial x_{i}^{2}}\frac{\partial\sum_{i=1}^{N}L_{i}}{\partial x_{i}},\label{eq:2Cent3}
\end{eqnarray}
solves the convex optimization problem \eqref{eq:1BarrierProb2}.
{However, it requires computation of the terms that are not available
to $i$-th agent. We exploit a distributed average tracking
tool that enables each agent to estimate these terms in a cooperative
fashion.}

Consider the agents \eqref{eq:2AgentsDyn} under {the
graph} $\mathcal{{G}}$. Suppose that each agent \textcolor{black}{admits}
the following dynamics

\begin{align}
\dot{\zeta}_{i}(t) & =-a\sum_{j\in\mathcal{{N}}_{i}}sgn(\chi_{i}(t)-\chi_{j}(t)),\label{eq:IntDynamics1}\\
\dot{\xi_{i}}(t) & =-b\sum_{j\in\mathcal{N}{}_{i}}sgn(\mu_{i}(t)-\mu_{j}(t)),\label{eq:2IntDynamics2}
\end{align}
where
\begin{equation}
\chi_{i}(t)=\zeta_{i}(t)+\left[{\displaystyle \begin{array}{c}
\frac{\partial L_{i}(x_{i},t)}{\partial x_{i}}\\
\frac{\partial^{2}L_{i}(x_{i},t)}{\partial x_{i}\partial t}\\
\frac{d}{dt}\frac{\partial L_{i}(x_{i},t)}{\partial x_{i}}\\
\frac{d}{dt}\frac{\partial^{2}L_{i}(x_{i},t)}{\partial x_{i}\partial t}
\end{array}}\right],\label{eq:2IntSignal1}
\end{equation}
and
\begin{align}
\mu_{i}(t) & =\xi_{i}(t)+\left[\begin{array}{c}
\frac{\partial^{2}L_{i}(x_{i},t)}{\partial x_{i}^{2}}\\
\frac{d}{dt}\frac{\partial^{2}L_{i}(x_{i},t)}{\partial x_{i}^{2}}
\end{array}\right].\label{eq:2IntSignal2}
\end{align}

{It is easy to see that over the graph} $\mathcal{G}$,
$\sum_{i=1}^{N}\dot{\zeta}_{i}(t)=0$. If we assume that $\sum_{i=1}^{N}\zeta(0)=0$,
then, it concludes $\sum_{i=1}^{N}\zeta_{i}(t)=0$ for $t>0$
since $\sum_{i=1}^{N}\dot{\zeta}_{i}(t)=0$. Now, from
the equation \eqref{eq:2IntSignal1}, we have $\sum_{i=1}^{N}\chi_{i}(t)=\sum_{i=1}^{N}\left[{\displaystyle \begin{array}{c}
\frac{\partial L_{i}(x_{i},t)}{\partial x_{i}}\\
\frac{\partial^{2}L_{i}(x_{i},t)}{\partial x_{i}\partial t}\\
\frac{d}{dt}\frac{\partial L_{i}(x_{i},t)}{\partial x_{i}}\\
\frac{d}{dt}\frac{\partial^{2}L_{i}(x_{i},t)}{\partial x_{i}\partial t}
\end{array}}\right]$. It follows from Theorem 1 in \cite{chen2012distributed} that with
$a>\underset{t}{sup}\left\{ \left\Vert \zeta_{i}(x_{i},t)\right\Vert _{\infty}\right\} $,
$\forall i\in\mathcal{N}$, $\left|\chi_{i}(t)-\chi_{j}(t)\right|=0\,\forall i,j\in\mathcal{N}$
in an upper-bounded finite time, say  $T_{k}$. With $\chi_{i}(t)=\chi_{j}(t)$,
for $t>T_{k}$, the following holds,
\begin{equation}
\chi_{i}(t)=\frac{1}{N}\sum_{i=1}^{N}\left[{\displaystyle \begin{array}{c}
\frac{\partial L_{i}(x_{i},t)}{\partial x_{i}}\\
\frac{\partial^{2}L_{i}(x_{i},t)}{\partial x_{i}\partial t}\\
\frac{d}{dt}\frac{\partial L_{i}(x_{i},t)}{\partial x_{i}}\\
\frac{d}{dt}\frac{\partial^{2}L_{i}(x_{i},t)}{\partial x_{i}\partial t}
\end{array}}\right].\label{eq:2est1}
\end{equation}
{Following a same line of reasoning, it can be concluded that after
a finite time}
\begin{eqnarray}
\mu_{i}(t) & = & \frac{1}{N}\sum_{i=1}^{N}\left[\begin{array}{c}
\frac{\partial^{2}L_{i}(x_{i},t)}{\partial x_{i}^{2}}\\
\frac{d}{dt}\frac{\partial^{2}L_{i}(x_{i},t)}{\partial x_{i}^{2}}
\end{array}\right],\label{eq:2est2}
\end{eqnarray}
where $b>\underset{t}{sup}\left\{ \left\Vert \mu_{i}(x_{i},t)\right\Vert _{\infty}\right\}, \forall i\in\mathcal{N}$.

Now, we present the new protocol
\begin{align}
u_{i} & =\mu_{i1}^{-2}\mu_{i2}(\text{\ensuremath{\chi}}_{i1}+\chi_{i2})-\mu_{i1}^{-1}(\chi_{i3}+\chi_{i4})-\mu_{i1}\chi_{i1}+r_{i},\label{eq:2InputLaw2}
\end{align}
{where $\chi_{i1}=\frac{\partial L_{i}(x_{i},t)}{\partial x_{i}}$,
$\chi_{i2}=\frac{\partial^{2}L_{i}(x_{i},t)}{\partial x_{i}\partial t}$,
$\chi_{i3}=\frac{d}{dt}\frac{\partial L_{i}(x_{i},t)}{\partial x_{i}}$,
$\chi_{i4}=\frac{d}{dt}\frac{\partial^{2}L_{i}(x_{i},t)}{\partial x_{i}\partial t}$,
$\mu_{i1}=\frac{\partial^{2}L_{i}(x_{i},t)}{\partial x_{i}^{2}}$,
and $\mu_{i2}=\frac{d}{dt}\frac{\partial^{2}L_{i}(x_{i},t)}{\partial x_{i}^{2}}$.
}To prove consensus on the position states, we refer the readers to
Lemma \ref{prop:ConsensusProp2}.
\begin{lemma}
Suppose that Assumptions 1, 2, and 3 hold and $\sum_{i=1}^{N}\zeta_{i}(0)=0$,
$\sum_{i=1}^{N}\xi_{i}(0)=0$, $a>\underset{t}{sup}\left\{ \left\Vert \zeta_{i}(x_{i},t)\right\Vert _{\infty}\right\} $,
and $b>\underset{t}{sup}\left\{ \left\Vert \mu_{i}(x_{i},t)\right\Vert _{\infty}\right\} ,\,\forall i\in\mathcal{N}$.
Then, the protocol \eqref{eq:2InputLaw2} {drives}
the agents {with dynamics as in} \eqref{eq:2AgentsDyn}
to the solution of \eqref{eq:1BarrierProb2}. \end{lemma}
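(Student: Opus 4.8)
The plan is to leverage the finite-time convergence of the distributed average-tracking estimators so that, after a finite time, every agent's control input \eqref{eq:2InputLaw2} coincides with the centralized collective law \eqref{eq:2Cent3}; the conclusion then follows from the reasoning already established for Lemma \ref{CentCont} and Theorem \ref{the:Theorem2}. First I would invoke Theorem 1 of \cite{chen2012distributed}: under the gain conditions $a>\sup_t\|\zeta_i\|_\infty$ and $b>\sup_t\|\mu_i\|_\infty$ together with the zero-sum initializations $\sum_i\zeta_i(0)=0$ and $\sum_i\xi_i(0)=0$, the estimator states \eqref{eq:IntDynamics1}--\eqref{eq:2IntDynamics2} reach consensus in a finite time $T_k$, so that for $t>T_k$ the components of $\chi_i$ and $\mu_i$ equal the network averages displayed in \eqref{eq:2est1} and \eqref{eq:2est2}. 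By construction $\chi_{i3}=\frac{d}{dt}\chi_{i1}$, $\chi_{i4}=\frac{d}{dt}\chi_{i2}$ and $\mu_{i2}=\frac{d}{dt}\mu_{i1}$, and these averaged quantities are common to all agents.

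The central algebraic step is to substitute the converged estimates into \eqref{eq:2InputLaw2} and to verify, using $\frac{d}{dt}\big(\mu_{i1}^{-1}(\chi_{i1}+\chi_{i2})\big)=-\mu_{i1}^{-2}\mu_{i2}(\chi_{i1}+\chi_{i2})+\mu_{i1}^{-1}(\chi_{i3}+\chi_{i4})$, that the first two terms of \eqref{eq:2InputLaw2} collapse to $-\frac{d}{dt}\big(\mu_{i1}^{-1}(\chi_{i1}+\chi_{i2})\big)$ and that, together with the third term $-\mu_{i1}\chi_{i1}$, the protocol reproduces the collective law \eqref{eq:2Cent3} with $Q(x,t)=\sum_{i=1}^N L_i(x_i,t)$. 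Because the estimates are identical across agents, the averaged Hessian term $\mu_{i1}$ plays precisely the role that the common-Hessian hypothesis $\frac{\partial^2 f_i}{\partial x_i^2}=\frac{\partial^2 f_j}{\partial x_j^2}$ played in Theorem \ref{the:Theorem2}; this is the device by which the distinct-constraint restriction is removed. I would carry out this expansion with care, tracking the derivative-tracking components $\chi_{i3},\chi_{i4},\mu_{i2}$, since this reconstruction of the $\frac{d}{dt}(\cdot)$ terms of \eqref{eq:2Cent3} is the most error-prone part of the argument.

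With the controls reduced to the centralized form for $t>T_k$, I would reuse the Lyapunov function of Theorem \ref{the:Theorem2}, namely $V=\tfrac12\big(\sum_i\frac{\partial L_i}{\partial x_i}\big)^2+\tfrac12\big(\sum_{i=1}^{N} v_i+k_i\big)^2$ with $k_i$ as in \eqref{eq:k_i}, and recompute $\dot V$ along \eqref{eq:2AgentsDyn}. Using $\sum_i r_i=0$ for $t>\tau_k$ (guaranteed by Lemma \ref{prop:ConsensusProp2}) and the common value of the averaged Hessian, the cross terms cancel exactly as in Lemma \ref{CentCont}, leaving $\dot V=-\big(\sum_i\frac{\partial L_i}{\partial x_i}\big)^2\le0$ for $t>T_k$. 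Boundedness of $V$ then yields $\sum_i\frac{\partial L_i}{\partial x_i}\in\mathcal{L}^\infty\cap\mathcal{L}^2$, and Barbalat's lemma \cite{tao1997simple} gives $\sum_i\frac{\partial L_i}{\partial x_i}\to0$, i.e. the first optimality condition in \eqref{eq:1OptCondition}. Feasibility $g_i(x_i(t))<0$ for all $t$ would be obtained by contradiction exactly as in Theorems \ref{thm:Theorem1} and \ref{the:Theorem2}: were some $g_i(x_i(\cdot))$ to reach zero at a finite time, the corresponding barrier gradient, and hence $\sum_i\frac{\partial L_i}{\partial x_i}$, would blow up, contradicting the $\mathcal{L}^\infty$ bound. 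Combined with the practical stationary consensus of Lemma \ref{prop:ConsensusProp2}, this places the common limiting position at the solution of \eqref{eq:1BarrierProb2}.

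The main obstacle I anticipate is the hand-off at the finite time $T_k$: the Lyapunov decrease holds only once the estimators have converged, so I must separately certify that the states $(x_i,v_i)$ suffer no finite escape on $[0,T_k]$. This follows because the right-hand sides of \eqref{eq:2AgentsDyn} and \eqref{eq:2InputLaw2} are locally Lipschitz and the inputs stay bounded on the compact interval (the feasible interior is preserved and the estimator signals are, by hypothesis, uniformly bounded), so the asymptotic analysis may legitimately be started from $T_k$. A secondary subtlety is the bookkeeping of the normalizing factors $1/N$ carried by the averaged estimates \eqref{eq:2est1}--\eqref{eq:2est2} through the inverse-Hessian terms of \eqref{eq:2InputLaw2}, which must be checked to leave the reconstructed centralized law \eqref{eq:2Cent3} intact.
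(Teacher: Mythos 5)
Your overall strategy coincides with the paper's: finite-time consensus of the estimator states via Theorem 1 of \cite{chen2012distributed}, a Lyapunov--Barbalat argument for the stationarity condition once the estimators have converged, feasibility by contradiction through the barrier term, and a no-finite-escape argument on the interval before convergence. There is, however, one concrete step that fails as you have written it: the choice of Lyapunov function.

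You propose to reuse the Lyapunov function of Theorem \ref{the:Theorem2},
\[
V=\tfrac12\Bigl(\sum_{i=1}^N\tfrac{\partial L_i}{\partial x_i}\Bigr)^2+\tfrac12\Bigl(\sum_{i=1}^N v_i+k_i\Bigr)^2,
\]
with $k_i$ the \emph{local} quantity \eqref{eq:k_i}, built from agent $i$'s own Hessian $\partial^2L_i/\partial x_i^2$. But the protocol \eqref{eq:2InputLaw2} is built from the \emph{averaged} estimator outputs: as you yourself note, after $T_k$ its first two terms equal $-\frac{d}{dt}\bigl[\mu_{i1}^{-1}(\chi_{i1}+\chi_{i2})\bigr]$, where after consensus $\mu_{i1}^{-1}(\chi_{i1}+\chi_{i2})=\bigl(\sum_j\partial^2L_j/\partial x_j^2\bigr)^{-1}\sum_j\bigl(\partial L_j/\partial x_j+\partial^2L_j/\partial x_j\partial t\bigr)$. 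In Case II the constraints, and hence the Hessians of the $L_i$, are distinct, so $\sum_i k_i\neq\sum_i\mu_{i1}^{-1}(\chi_{i1}+\chi_{i2})$ in general. Consequently, when you differentiate your $V$, the factor $\sum_i(u_i+\dot k_i)$ retains the residual $\frac{d}{dt}\sum_i\bigl[k_i-\mu_{i1}^{-1}(\chi_{i1}+\chi_{i2})\bigr]$, which is sign-indefinite, and the cross-term cancellation ``exactly as in Lemma \ref{CentCont}'' that your argument hinges on does not occur. This is precisely why the paper does not reuse Theorem \ref{the:Theorem2}'s function but instead takes
\[
V(\bar\chi,\bar\mu,t)=\tfrac12\Bigl(\sum_{i=1}^N\chi_{i1}\Bigr)^2+\tfrac12\Bigl(\sum_{i=1}^N v_i+\mu_{i1}^{-1}\bigl(\chi_{i1}+\chi_{i2}\bigr)\Bigr)^2,
\]
i.e. the velocity-compensation term is written with the same averaged quantities that enter the control law, so that substituting \eqref{eq:2InputLaw2} and $\sum_i r_i=0$ (Lemma \ref{prop:ConsensusProp2}) collapses $\dot V$ to $-\bigl(\sum_i\chi_{i1}\bigr)^2\le 0$ for $t>T_k+\tau_k$. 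Once this substitution is made, the rest of your proposal — the $1/N$ bookkeeping (which indeed only rescales the gradient-descent term and is harmless), the hand-off at $T_k$ via local Lipschitzness and bounded inputs, Barbalat's lemma \cite{tao1997simple}, and the feasibility contradiction as in Theorems \ref{thm:Theorem1} and \ref{the:Theorem2} — matches the paper's proof step for step.
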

\begin{proof}
Let us define the following Lyapunov candidate function,
\[
V(\bar{\chi},\bar{\mu},t)=\frac{1}{2}\left(\sum_{i=1}^{N}\chi_{i1}\right)^{2}+\frac{1}{2}\left(\sum_{i=1}^{N}v_{i}+\mu_{i1}^{-1}\left(\chi_{i1}+\chi_{i2}\right)\right)^{2},
\]
where $\bar{\chi}=\left[\chi_{1}^{\top},\ldots,\chi_{N}^{\top}\right]^{\top}$
and $\bar{\mu}=\left[\mu_{1}^{\top},\ldots,\mu_{N}^{\top}\right]^{\top}$.
After calculating the time derivative of $V(\bar{\chi},\bar{\mu},t)$
along the trajectories of \eqref{eq:2AgentsDyn}, the following holds,
\begin{align}
\dot{V}(\bar{\chi},\bar{\mu},t) & =\sum_{i=1}^{N}\chi_{i1}\sum_{i=1}^{N}\chi_{i3}+\left(\sum_{i=1}^{N}v_{i}+\mu_{i1}^{-1}\left(\chi_{i1}+\chi_{i2}\right)\right)\nonumber \\
 & \left(\sum_{i=1}^{N}u_{i}-\mu_{i1}^{-2}\mu_{i2}(\text{\ensuremath{\chi}}_{i1}+\chi_{i2})+\mu_{i1}^{-1}(\chi_{i3}+\chi_{i4})\right).\label{eq:Lemma6:1}
\end{align}
From \eqref{eq:2InputLaw2} and \eqref{eq:Lemma6:1}, we can write
\begin{eqnarray*}
\dot{V}(\bar{\chi},\bar{\mu},t) & = & \sum_{i=1}^{N}\chi_{i1}\sum_{i=1}^{N}\chi_{i3}+\left(\sum_{i=1}^{N}v_{i}+\mu_{i1}^{-1}\left(\chi_{i1}+\chi_{i2}\right)\right)\left(-\sum_{i=1}^{N}\mu_{i1}\chi_{i1}\right),
\end{eqnarray*}
in which we have used from the fact that $\sum_{i=1}^{N}r_{i}(t)=0$
in finite time, i.e. $t>\tau_{k}$ according to Lemma \ref{prop:ConsensusProp2}.
Note that $\sum_{i=1}^{N}\chi_{i3}=\sum_{i=1}^{N}\mu_{i1}v_{i}+\chi_{i2}$.
{According to} Theorem 1 in \cite{chen2012distributed},
 there exists a finite time $T_{k}$ after which $\mu_{i}(t)=\mu_{j}(t)$,
$\chi_{i}(t)=\chi_{j}(t)$, $\forall i,j\in\mathcal{N}$, {when}
$\sum_{i=t}^{N}\zeta_{i}(0)=0$, $\sum_{i=t}^{N}\xi_{i}(0)=0$, $a>\underset{t}{sup}\left\{ \left\Vert \zeta_{i}(x_{i},t)\right\Vert _{\infty}\right\} ,\,\forall i\in\{1,\ldots,N\}$,
and $b>\underset{t}{sup}\left\{ \left\Vert \mu_{i}(x_{i},t)\right\Vert _{\infty}\right\} \forall i.$
Thus,
\begin{eqnarray}
\dot{V}(\bar{\chi},\bar{\mu},t) & = & -\left(\sum_{i=1}^{N}\chi_{i1}\right)^{2}\label{eq:EstLyapDerFin}\\
 & \leq & 0,\,\forall t>T_{k}+\tau_{k}.\label{eq:EstLyapDerFin1}
\end{eqnarray}
We assert that the solutions to locally Lipschitz dynamics \eqref{eq:2AgentsDyn}
with a bounded input stay bounded in a finite time. Thus, for $t<T_{k}+\tau_{k}$,
$\chi_{i1}$, $\forall i\in\mathcal{N}$, remains bounded. From
\eqref{eq:EstLyapDerFin1} and by means of Barbalat's lemma, one can
show that $\sum_{i=1}^{N}\chi_{i1}=0$ as $t\rightarrow\infty$.
Thus, the stationary optimality condition is achieved. {The
remainder of the proof {for the feasibility condition} can be done similar to that of Theorem \ref{the:Theorem2};
hence, it is omitted here.}
\end{proof}

\section{Numerical Simulation}

{\crb This section provides numerical simulation to demonstrate the performance
of the presented distributed algorithm. We consider eight {2-dimensional}
double-integrator agents that move in a 2-D plane with $x$ and $y$
axis. In our simulation, the information sharing graph $\mathcal{{G}}$
is set as: $1\Leftrightarrow2\Leftrightarrow3\Leftrightarrow4\Leftrightarrow5\Leftrightarrow6\Leftrightarrow7\Leftrightarrow8$. Set
the initial conditions for the positions of agents $1,\ldots,\,\text{and}\,8$ as $\left(-1,1\right),\left(1,0\right), \left(2,-1\right)$,
 $\left(2,2\right)$, $\left(0,4\right)$, $\left(1.5,1.5\right)$, $\left(0.5,4\right)$, and, $\left(1.5,0\right)$, respectively. The
local objective functions for all six agents are as follows:}
\begin{eqnarray*}
f_{1}(x_{1},y_{1}) & = & (x_{1}-4)^{2}+(y_{1}-5)^{2},\\
f_{2}(x_{2},y_{2}) & = & (x_{2}-5)^{2}+(y_{2}-9)^{2},\\
f_{3}(x_{3},y_{3}) & = & (x_{3}-4)^{2}+(y_{3}-10)^{2},\\
f_{4}(x_{4},y_{4}) & = & (x_{4}-3)^{2}+(y_{4}+6)^{2},\\
f_{5}(x_{5},y_{5}) & = & (x_5+1)^2+y_{5}^2,\\
f_{6}(x_{6},y_{6}) & = & (x_6+3)^2+(y_6+3)^2,\\
f_{7}(x_{7},y_{7}) & = & (x_7-4)^2+(y_7-1)^2,\\
f_{8}(x_{8},y_{8}) & = & x_8^2+(y_8-8)^2.
\end{eqnarray*}

{\crb Agent 1 admits the inequality constraint $g_{1}(x_{1},y_{1})=x_{1}+y_{1}-5\leq0$.
Agent 2 has the local  constraint $g_{2}(x_{2},y_{2})=x_{2}^{2}+y_{2}^{2}-10\leq0$.
Agent 3 adopts has the local  constraint $g_{3}(x_{3},y_{3})=x_{3}^{2}+y_{3}^{2}-10\leq0$.
Agent 4 accepts the constraint of $g_{4}(x_{4},y_{4})=x_{4}+y_{4}-12\leq0$ while
agents 5 and 6 are subject to the constraints $g_5(x_5)=x_5-2\leq0$ and $g_6(x_6,y_6)=x_6+y_6-4\leq0$, respectively.
The agents 7 is restricted to the constraint $g_7(x_7)=(x_7-3)^2-4\leq0$, and, finally, the agent 8's movement along the y-axis is
constrained by the inequality $g_8(y_8)=(y_8-2)^2-9 \leq 0$.
The global optimum point is $(1,3)$.} We adopt the protocol \eqref{eq:2InputLaw2} to drive all agents
toward the optimal consensus point. Suppose that each agent has an
internal dynamics as in \eqref{eq:IntDynamics1} and \eqref{eq:2IntDynamics2}
to construct the control protocol \eqref{eq:2InputLaw2}, where
we choose $a=20$ and $b=20$. The trajectories of all
agents are shown in Figure 1. In Figure 2, trajectories of the agents'
velocities along $x$ and $y$ dimensions are plotted.

\begin{figure}
\label{fig:figone}
\begin{center}
\resizebox*{10cm}{!}{\includegraphics{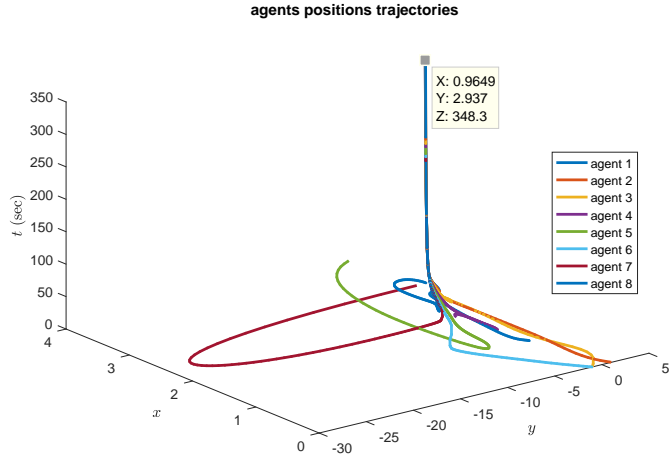}}\hspace{5pt}
\caption{Position trajectories of double-integrator agents in numerical example.}
\end{center}
\end{figure}

\begin{figure}
\begin{center}
\resizebox*{10cm}{!}{\includegraphics{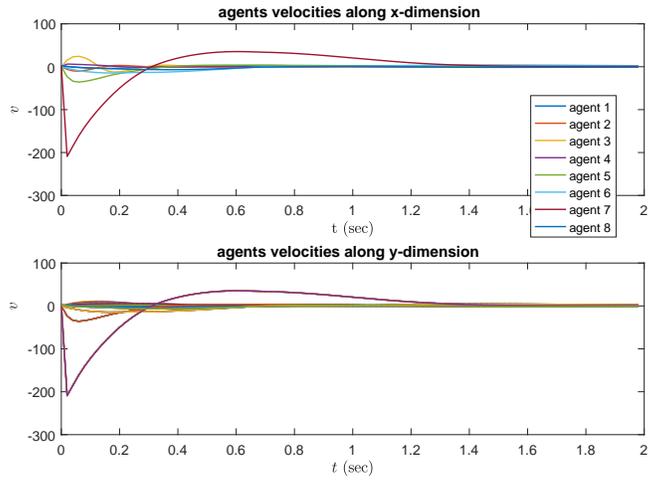}}\hspace{5pt}
\label{fig:figtwo}
\caption{Velocity trajectories of double-integrator agents along both $x$ (down)
and $y$ (top) dimension.}
\end{center}

\end{figure}

\section{Conclusion}

The problem of distributed constrained optimal consensus for undirected
networks of dynamical agents was fully investigated in this paper. Here,
all agents are supposed to rendezvous  at a point that
minimizes a collective convex objective function with regard to some local
constraints. We studied this problem for two typical dynamics, namely
single-integrator and double-integrator dynamics. To tackle the
problem, we split it into two separate subproblems, viz consensus subproblem and
distributed constrained convex optimization one. Then, we proposed a distributed
control law composed of a consensus protocol and a term associated with decentralized convex
optimization algorithm. In the proposed setup, each agent requires
to know of its own states and the relative positions of agents within
its neighborhood set. No information associated with objective functions
are exchanged between agents.

To certify consensus, we exploited some theory associated with the analysis of perturbed
systems stability. As for constrained convex optimization
algorithm, we adopted an extended form of the interior-point method. Then, through Barbalat's lemma, it was illustrated that optimality conditions,
including the stationary condition and the feasibility condition,
uniformly hold.

Finally, to relax the restricting assumption of \textcolor{black}{local constraints being common}, we exploited the distributed average tracking
tool to estimate some essential information associated with the whole network at the local level.
Then, we proved the convergence of our algorithm.

\section{Appendix}

In this appendix, we clarify how the inequality $\bar{v}^{\top}\Pi sig(\bar{v})^{p}\geq{\scriptstyle {\textstyle \frac{N-1}{N}}}\left(\left\Vert \bar{v}\right\Vert _{p+1}\right)^{p+1}$
holds for any $\bar{v}\in\mathbb{{R}}^{N}$.
\begin{align}
\bar{v}^{\top}\Pi sig(\bar{v})^{p} & = \bar{v}^{\top}\left[\begin{array}{cccc}
\frac{N-1}{N} & -\frac{1}{N} & \cdots & -\frac{1}{N}\\
-\frac{1}{N} & \frac{N-1}{N} & \cdots & -\frac{1}{N}\\
\vdots & \vdots & \ddots & \vdots\\
-\frac{1}{N} & -\frac{1}{N} & \cdots & \frac{N-1}{N}
\end{array}\right]sig(\bar{v})^{p}\nonumber \\
 & =  {\scriptstyle {\textstyle \frac{N-1}{N}}}\sum_{i=1}^{N}\left|v_{i}\right|^{p+1}-{\textstyle {\textstyle \frac{1}{N}}}\sum_{i=1}^{N}\sum_{\underset{j\neq i}{j=1}}^{N}v_{j}\left|v_{i}\right|^{p}sign(v_{i})\\
 & \geq {\scriptstyle {\textstyle \frac{N-1}{N}}}\sum_{i=1}^{N}\left|v_{i}\right|^{p+1}-{\textstyle {\textstyle \frac{1}{N}}}\sum_{i=1}^{N}\sum_{\underset{j\neq i}{j=1}}^{N}\left|v_{j}\right|\left|v_{i}\right|^{p}\label{eq:A3}\\
 & =  {\textstyle \frac{1}{N}\left|v_{1}\right|^{p}}\sum_{j=2}^{N}\left(\left|v_{1}\right|-\left|v_{j}\right|\right)\nonumber\\
 & +  {\textstyle \frac{1}{N}\left|v_{2}\right|^{p}}\sum_{\underset{j\neq2}{j=1}}^{N}\left(\left|v_{2}\right|-\left|v_{j}\right|\right)+\cdots+{\textstyle \frac{1}{N}\left|v_{N}\right|^{p}}\sum_{\underset{j\neq N}{j=1}}^{N}\left(\left|v_{N}\right|-\left|v_{j}\right|\right)\label{eq:A1}\\
 & =  I
\end{align}
Now, assume that $\left|v_{i}\right|-\left|v_{j}\right|=\epsilon_{ij}$
for some $\epsilon_{ij}\in\mathbb{{R}}$. Then, we rewrite the left
side of the equality \eqref{eq:A1} as

\begin{equation}
I=\frac{1}{2}\left({\textstyle \frac{1}{N}}\sum_{j=2}^{N}\epsilon_{1j}\left(\left|v_{1}\right|^{p}-\left|v_{j}\right|^{p}\right)+\cdots+{\textstyle \frac{1}{N}}\sum_{\underset{j\neq N}{j=1}}^{N}\epsilon_{Nj}\left(\left|v_{N}\right|^{p}-\left|v_{j}\right|^{p}\right)\right).\label{eq:A2}
\end{equation}
It is straightforward to show that $I\geq0$, for any $\bar{v}\in\mathbb{{R}}^{N}$.
Therefore, $\bar{v}^{\top}\Pi sig(\bar{v})^{p}\geq0,\forall\bar{v}\in\mathbb{R}^{N}$.
On the other hand, from \eqref{eq:A3}, we have
\begin{eqnarray}
\bar{v}^{\top}\Pi sig(\bar{v})^{p} & \geq & {\scriptstyle {\textstyle \frac{N-1}{N}}}\sum_{i=1}^{N}\left|v_{i}\right|^{p+1}\nonumber \\
 & = & {\scriptstyle {\textstyle \frac{N-1}{N}}}\left(\left\Vert \bar{v}\right\Vert _{p+1}\right)^{p+1}.\label{eq:A4}
\end{eqnarray}

\bibliographystyle{apacite}
\bibliography{interactapasample}

\end{document}